
\documentclass{daj}

\usepackage{amssymb,amsmath,amsthm}


\newtheorem{proposition}[thm]{Proposition}
\newtheorem{theorem}[thm]{Theorem}

\newtheorem{conjecture}[thm]{Conjecture}

\dajAUTHORdetails{%
  title = {Fuglede's conjecture holds on cyclic groups ${\mathbb{Z}}_{pqr}$}, 
  author = {Ruxi Shi},
  plaintextauthor = {Ruxi Shi},
    %
    %
  plaintexttitle = {Fuglede's conjecture holds on cyclic groups Zpqr}, 
    %
    %
    %
  copyrightauthor = {R. Shi},
   %
  keywords = {Spectral set, tiling, Fuglede's conjecture, vanishing
  	sums of roots of unity},
}   

\dajEDITORdetails{%
   year={2019},
   number={14},
   received={24 July 2018},   
   revised={23 January 2019},    
   published={15 October 2019},  
   doi={10.19086/da.10570},       
}   

\def\ZN{\mathbb{Z}_{N}}
\def\Phip{\Phi_{p}(X^{N/p})}
\def\Phiq{\Phi_{q}(X^{N/q})}

\def\Z{\mathbb{Z}}
\def\ZZ{\mathcal{Z}}
\def\Zpqr{\mathbb{Z}_{pqr}}
\def\Ta{{\rm (T1)}}
\def\Tb{{\rm (T2)}}
\def\Zppp{\mathbb{Z}_{p_1p_2\dots p_k}}

\begin{document}

\begin{frontmatter}[classification=text]

\title{Fuglede's conjecture holds on cyclic groups ${\mathbb{Z}}_{pqr}$} 

\author[ruxi]{Ruxi Shi\thanks{Supported by the Centre of Excellence in Analysis and Dynamics Research funded by the
		Academy of Finland.}}

\begin{abstract}
Fuglede's spectral set conjecture states that a subset $\Omega$ of a locally compact abelian group $G$ tiles the group by translation if and only if there exists a subset of continuous group characters which is an orthogonal basis of $L^2(\Omega)$. We prove that Fuglede's conjecture holds on cyclic groups $\Zpqr$ with $p,q,r$ distinct primes.
\end{abstract}
\end{frontmatter}

\section{Introduction}
Let $G$ be a locally compact abelian group and let $\widehat{G}$ be its dual group consisting of all continuous group characters. Let $\Omega$ be a Borel measurable subset in $G$ with positive finite Haar measure.
We say that the set $\Omega$ is a {\em spectral set} if there exists a subset $\Lambda \subset \widehat{G}$ which is
an orthogonal basis of the Hilbert space $L^2(\Omega)$, and that $\Omega$ is  a
{\em  tile}  of $G$ by translation  if there exists a  set $T \subset G$ of translates
such that $\sum_{t\in T} 1_\Omega(x-t) =1$ for almost all $x\in G$, where $1_A$ denotes the indicator function of a set $A$. 

In the case where $G=\mathbb{R}^d$, Fuglede \cite{f}  formulated the following conjecture.

\begin{conjecture} A Borel set $\Omega\subset \mathbb{R}^d$ of positive and finite Lebesgue measure is a spectral set if and only if it is a tile. \end{conjecture}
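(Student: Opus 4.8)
The plan is to treat the two implications of the stated equivalence separately and to try to reduce the continuous problem to a discrete one. For one half one can follow Fuglede's original observation: if $\Omega$ tiles $\mathbb{R}^d$ by a lattice $L$, then the dual lattice $L^{*}$ furnishes an orthogonal basis of exponentials for $L^2(\Omega)$, so every lattice tile is spectral, and symmetrically every set with a lattice spectrum tiles $\mathbb{R}^d$. The entire difficulty therefore lies in the tilings and spectra that are \emph{not} lattice-based.

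First I would attempt to show that, for a bounded Borel set $\Omega$, both the tiling property and the spectrality property force enough arithmetic rigidity---rationality of the translation set, or of the spectrum, after an affine change of coordinates---to pass to a periodic model. Concretely, one would hope that a tiling complement can be chosen periodic with rational density, and that a spectrum can be assumed to sit in a rational lattice; quotienting by a large common lattice would then encode $\Omega$ as a subset of a finite abelian group. This would reduce the $\mathbb{R}^d$ question to the corresponding ``spectral $=$ tile'' problem on a finite abelian group, where the Fourier transform is a finite sum and both conditions become statements about the zero set of a mask polynomial and about vanishing sums of roots of unity.

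At that stage the decisive step would be the finite combinatorial analysis: writing the indicator of $\Omega$ as a polynomial, translating tiling into a divisibility/factorization condition on cyclotomic polynomials, translating spectrality into the existence of a suitably sized family of characters annihilating $\widehat{1_{\Omega}}$, and matching the two. This is precisely the regime in which the cyclic group $\Zpqr$ is tractable, and it is the engine of the present paper.

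The main obstacle---indeed the reason the displayed statement cannot be proved as worded---is that this reduction does not survive in general. The conjecture is false for $d\ge 3$: Tao constructed a spectral set in $\mathbb{R}^5$ that does not tile, and subsequent refinements produce counterexamples to both implications already in dimension three, while only $d=1,2$ remain genuinely open. Hence no argument can establish the equivalence for all $d$. The honest provable content is its restriction to structured finite groups such as $\Zpqr$, which is what the paper actually establishes; I would therefore expect the real difficulty to be concentrated not in any manipulation on $\mathbb{R}^d$ but in controlling vanishing sums of roots of unity on $\Zpqr$.
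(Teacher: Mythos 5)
The statement you were asked about is Conjecture 1.1, Fuglede's original conjecture for $\mathbb{R}^d$, which the paper does not prove and could not prove: as its introduction records, Tao disproved the direction ``spectral $\Rightarrow$ tile'' for $d\ge 5$, the conjecture is now known to be false in both directions for $d\ge 3$, and it remains open for $d=1,2$. Your proposal correctly declines to prove it and correctly locates the provable content --- the lattice case going back to Fuglede, and the discrete/cyclic-group setting such as $\mathbb{Z}_{pqr}$ treated in this paper --- so your assessment matches the paper's own treatment of this statement as an unproved (and in general false) conjecture.
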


\noindent Fuglede's conjecture has
attracted considerable attention over the last decades.  Many
positive results were obtained before Tao \cite{t} disproved the conjecture by showing that the
direction ``Spectral $\Rightarrow$ Tiling'' does not hold when $d\ge 5$.
Now it is known that the conjecture is false in both directions for $d \geq 3$ \cite{fmm,k,km,m}.  However, the conjecture is still open in lower dimensions ($d=1, 2$).

For any locally compact abelian group $G$, it is natural to formulate the following conjecture, also called Fuglede's conjecture (or spectral set conjecture) in $G$.

\begin{conjecture} A Borel set $\Omega\subset G$ of positive and finite Haar measure is a spectral set if and only if it is a tile. \end{conjecture}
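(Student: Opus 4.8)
The plan is to reduce Fuglede's conjecture on $\Zpqr$ to a finite combinatorial problem about which cyclotomic polynomials divide the \emph{mask polynomial} $A(X)=\sum_{a\in A}X^{a}$ of a subset $A\subseteq\Zpqr$, and then to settle it using the theory of vanishing sums of roots of unity. Since Haar measure on the finite group $\Zpqr$ is counting measure, every subset is Borel and I may take $\Omega=A$ to be an arbitrary subset. Writing $N=pqr$ and $\omega=e^{2\pi i/N}$, the Fourier coefficient $\widehat{1_A}(\xi)$ equals $A(\omega^{\xi})$, and $A(\omega^{\xi})=0$ precisely when $\Phi_{d}(X)\mid A(X)$ with $d=N/\gcd(\xi,N)$. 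I therefore record the divisor set $\ZZ_A=\{\,d\mid N:\ d>1,\ \Phi_{d}(X)\mid A(X)\,\}\subseteq\{p,q,r,pq,pr,qr,pqr\}$. An inverse Fourier computation gives the two dictionaries I will use throughout: $A$ tiles $\Zpqr$ with complement $B$ if and only if $|A|\,|B|=N$ and $\ZZ_A\cup\ZZ_B=\{p,q,r,pq,pr,qr,pqr\}$; and $A$ is spectral with spectrum $\Lambda$ if and only if $|\Lambda|=|A|$ and every nonzero difference $\lambda-\lambda'$ lies in the zero set $\{\xi:\ N/\gcd(\xi,N)\in\ZZ_A\}$, i.e.\ $\Lambda$ is a clique of size $|A|$ in the Cayley graph on $\Zpqr$ with that (symmetric) connection set.

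Next I would reduce the number of cases. Both properties are preserved by the affine maps $x\mapsto ux+v$ with $u$ a unit, and both force $|A|$ to divide $N$ (immediate for tiles, and a standard lemma for spectral sets); the cardinalities $1$ and $N$ are trivial, and relabelling the primes collapses the remaining six cardinalities to the representatives $|A|=p$ and $|A|=pq$, for which a tiling complement $B$ has the convenient small size $qr$ or $r$. The engine for analysing these is the elementary fact that $\Phi_{p}(X)\mid A(X)$ holds exactly when $A$ is equidistributed modulo $p$ (each class mod $p$ meets $A$ in $|A|/p$ points), together with the Lam--Leung cardinality constraint for vanishing sums of $d$-th roots of unity, which forces $|A|$ into the numerical semigroup generated by the prime factors of $d$, and the de Bruijn structure theorem, which for the two-prime divisors $d\in\{pq,pr,qr\}$ expresses such a relation as a nonnegative combination of rotated $p$-, $q$- or $r$-gons and hence fibers $A$ along those directions of $\Zpqr\cong\Z_p\times\Z_q\times\Z_r$. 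For the full modulus $d=pqr$, however, no such clean decomposition exists, and this three-prime relation is the crux of the difficulty. The prime case $|A|=p$ is then settled directly from the equidistribution criterion and the semigroup constraint.

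For the direction ``tile $\Rightarrow$ spectral'' I would use the Coven--Meyerowitz conditions \Ta\ and \Tb: a tile always satisfies \Ta, and on the squarefree modulus $N=pqr$ the distinct prime-power divisors are just $p,q,r$, so \Tb\ amounts to showing that whenever two (or three) of $p,q,r$ lie in $\ZZ_A$ their product does as well. This implication is what the tiling coverage $\ZZ_A\cup\ZZ_B$ combined with the vanishing-sum analysis is designed to deliver; {\L}aba's theorem that \Ta\ and \Tb\ together imply spectrality then finishes the direction, and the spectrum it produces is exactly the clique demanded by the dictionary. The harder direction, and the main obstacle, is ``spectral $\Rightarrow$ tile'' for $|A|=pq$. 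Here one is handed only a size-$pq$ clique $\Lambda$ in the connection set $\{\xi:N/\gcd(\xi,N)\in\ZZ_A\}$ and must build an honest tiling complement, yet the clique condition is a priori far weaker than the rigid coverage $\ZZ_A\cup\ZZ_B=\{p,q,r,pq,pr,qr,pqr\}$ required for a tiling. I must therefore show that a clique of size $pq$ cannot exist unless $\ZZ_A$ already contains enough divisors to force $A$ into a genuine product structure across the three prime directions. Ruling out the exotic spectral configurations --- those in which $\Lambda$ exploits the mixed divisors $pq,pr,qr$ and the recalcitrant divisor $pqr$ without $A$ being truly fibered --- is precisely the step that does not reduce to the two-prime situation $\Zpq$ and requires a careful joint analysis of how the three primes interact in $\ZZ_A$; this is where I expect essentially all of the difficulty to concentrate.
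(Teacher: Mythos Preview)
First, a framing note: the displayed statement is the general Fuglede conjecture for an arbitrary locally compact abelian group $G$, which the paper does \emph{not} prove --- indeed it is known to fail (Tao). What you are actually sketching is the paper's Theorem~\ref{main thm}, the special case $G=\Zpqr$, and I compare your proposal against the paper's proof of that.

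Your outline correctly routes ``tile $\Rightarrow$ spectral'' through Coven--Meyerowitz and \L aba, and correctly isolates ``spectral $\Rightarrow \Ta+\Tb$'' as the substantive direction. But there is a genuine gap. Your reduction to $|A|\in\{p,pq\}$ rests on the assertion that spectrality forces $|A|\mid N$, which you call ``a standard lemma''; no such lemma is available a priori, and in the paper this divisibility is an \emph{output} of the argument, not an input. The paper instead splits cases by $\sharp(\ZZ_A\cap\{pq,pr,qr\})\in\{0,1,2,3\}$ and \emph{derives} $|A|\in\{1,p,pq,pqr\}$ from the analysis.

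More seriously, for the hard step you openly say you have no mechanism to exclude ``exotic spectral configurations''. The paper supplies one, and it is the crux (Lemmas~\ref{lem:lalala} and~\ref{lem:OLALALA}): if $pA$ contains an $r$-cycle while the spectral difference condition forbids collisions in the $\Z_p\times\Z_q\times\Z_r$ picture, one extracts an \emph{injective} function $\Z_r\to\Z_p$, hence a strict inequality $p>r$ between the primes. Iterating this among $p,q,r$ via the spectral pair $(A,B)$ manufactures cycles of inequalities such as $p>r>p$ or $p>q>p$, a contradiction. This ordering trick is precisely what eliminates every putative non-tiling spectral set in all three nontrivial cases; it is absent from your proposal, and without it the direction ``spectral $\Rightarrow$ tile'' does not go through.
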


\noindent In its full generality, this conjecture is far from being
proved and is false for some specific groups as Tao \cite{t} showed. The question becomes for which group $G$, Fuglede's conjecture holds. For an integer $N\ge 1$, the ring of integers modulo $N$ is denoted by $\ZN=\mathbb{Z}/N\mathbb{Z}$. We now know that Fuglede's conjecture holds on $\mathbb{Z}_{p^n}$\cite{l2, ffs}, $\mathbb{Z}_p \times \mathbb{Z}_p$ \cite{imp}, $p$-adic field $\mathbb{Q}_p$ \cite{ffs, ffls} and $\mathbb{Z}_{p^nq}$ with $n\ge 1$ \cite{mk}. 

Borrowing the notation from \cite{dl}, write $S$-$T(G)$, respectively $T$-$S(G)$, if  the direction ``Bounded spectral sets $\Rightarrow$Tiles", respectively ``Bounded tiles $\Rightarrow$ Spectral sets", holds in $G$.  The following relations are well established \cite{lw,pw,ls,m,fmm,km,dj}:
$$
T\text{-}S(\mathbb{R})\Longleftrightarrow T\text{-}S(\mathbb{Z}) \Longleftrightarrow T\text{-}S(\mathbb{Z}_N) ~\text{for all}~N,
$$
and 
$$
S\text{-}T(\mathbb{R})\Longrightarrow S\text{-}T(\mathbb{Z}) \Longrightarrow S\text{-}T(\mathbb{Z}_N) ~\text{for all}~N. 
$$
We refer to \cite{dl} for an overview of the literature.
From the above, we see that the cyclic groups play important roles in the study of the spectral set conjecture in $\mathbb{R}$. However, when we focus on this conjecture on cyclic groups, it is only known that this conjecture holds for $\mathbb{Z}_{p^n}$ for $p$ prime and very recently for $\mathbb{Z}_{p^nq}$ for $p,q$ distinct primes. As for the direction $T$-$S$ alone, we know that \L aba \cite{l2} proved $T$-$S(\mathbb{Z}_{p^nq^m})$ for $p,q$ distinct prime and that \L aba\footnote{https://terrytao.wordpress.com/2011/11/19/some-notes-on-the-coven-meyerowitz-conjecture/
	$\sharp$comment-121464} and Meyerowitz\footnote{https://terrytao.wordpress.com/2011/11/19/some-notes-on-the-coven-meyerowitz-conjecture/
	$\sharp$comment-112975} proved  in comments on Tao's blog \cite{t2} 	
	 that a tile in a cyclic group whose order is square-free always tiles by a subgroup.

In this paper, we prove that the spectral set conjecture holds in $\Z_{pqr}$ with $p,q,r$ distinct primes. This result relies heavily on the structure of vanishing sums of roots of unity, which was originally shown in \cite{ll} by Lam and Leung. 
Such structure is useful in the study of the zeros of the mask polynomials. Let $A$ be a multi-set in $\ZN$. Recall that the \textit{mask polynomial} of $A$ is defined to be
$$
A(X)=\sum_{a\in A} m_a X^a,
$$
where $m_a$ is the multiplicity of $a$ in $A$. It is clear that the degree of $A(X)$ is at most $N-1$. Sometimes, the mask polynomial $A(X)$ is regarded as the polynomial in $\Z[X]/(X^N-1)$.  Actually, any integer polynomial of degree at most $N-1$ with non-negative coefficients is a mask polynomial of some multi-set in $\ZN$. Observe that $A$ is a subset in $\ZN$ if and only if the coefficients of $A(X)$ are $0$ or $1$.

Let $\Phi_n$ be the cyclotomic polynomial of order $n$. Let $S$ be the set of prime powers dividing $N$. Define
$$
S_A=\{s\in S: \Phi_s(X)|A(X) \}.
$$
Following Coven and Meyerowitz \cite{cm}, we say that the set $A$ satisfies the condition $\Ta$ if
$$
\sharp A=\prod_{s\in S_A} \Phi_s(1), 
$$
and that it satisfies the condition $\Tb$ if $\Phi_{s_1s_2\dots s_m}(X)$ divides $A(X)$ whenever $s_1, s_2, \dots, s_m\in S_A$ are powers of distinct primes.



Now we state our main result.
\begin{theorem}\label{main thm}
	Let $A\subset \mathbb{Z}_{pqr}$ with $p,q,r$ distinct primes. Then the following are equivalent.
	\begin{itemize}
		\item [(1)] The set $A$ satisfies $\Ta$ and $\Tb$.
		\item [(2)] The set $A$ is a spectral set.
		\item [(3)] The set $A$ tiles $\mathbb{Z}_{pqr}$ by translation.
	\end{itemize}
\end{theorem}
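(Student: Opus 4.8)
The plan is to recast the three conditions as statements about which cyclotomic factors $\Phi_d$ (for $d\mid pqr$) divide the mask polynomial $A(X)$, and then to close the cycle $(1)\Rightarrow(3)\Rightarrow(2)\Rightarrow(1)$. Because $N=pqr$ is square-free, the prime powers dividing $N$ are just $S=\{p,q,r\}$, so $S_A\subseteq\{p,q,r\}$ and the hypotheses become explicit: \Ta\ reads $\sharp A=\prod_{s\in S_A}s$, and \Tb\ reads that $\Phi_{pq},\Phi_{pr},\Phi_{qr},\Phi_{pqr}$ divide $A(X)$ whenever the relevant primes all lie in $S_A$. Writing $\omega=e^{2\pi i/N}$, the minimal polynomial of $\omega^{k}$ is $\Phi_{d}$ with $d=N/\gcd(N,k)$, so $\Phi_d\mid A(X)$ is equivalent to the vanishing of $\widehat{1_A}$ on the layer $D_d$ of elements of order $d$. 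Thus $A$ is spectral iff there is $\Lambda$ with $\sharp\Lambda=\sharp A$ and $(\Lambda-\Lambda)\setminus\{0\}\subseteq Z_A$, where $Z_A=\bigcup_{\Phi_d\mid A(X)}D_d$, and $A$ tiles iff there is $T$ with $\sharp A\cdot\sharp T=N$ and, for each $d\mid N$ with $d>1$, either $\Phi_d\mid A(X)$ or $\Phi_d\mid T(X)$. The cases $\sharp A\in\{1,N\}$ being trivial, I would reduce at once to $\sharp A\in\{p,q,r,pq,pr,qr\}$.

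Two of the three arrows are short. The implication $(1)\Rightarrow(3)$ is the sufficiency half of the Coven--Meyerowitz theorem \cite{cm}, valid in any $\ZN$, which I would quote directly. For $(3)\Rightarrow(2)$ I would use that, $N$ being square-free, a tile tiles by a subgroup (the fact of \L aba and Meyerowitz recorded in the introduction): so $A$ is a transversal of some subgroup $H\le\Zpqr$, whence $\widehat{1_A}\,\widehat{1_H}=N\,\delta_0$ on $\widehat{\Zpqr}$. On the annihilator $H^{\perp}$ one has $\widehat{1_H}=\sharp H\ne0$, forcing $\widehat{1_A}=0$ off the origin; since $\sharp H^{\perp}=\sharp A$, the subgroup $H^{\perp}$ is a spectrum and $A$ is spectral. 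This also yields $(1)\Rightarrow(2)$ by composition, so the entire content of the theorem is concentrated in $(2)\Rightarrow(1)$.

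The hard arrow is $(2)\Rightarrow(1)$: from a spectrum I must extract \Ta\ and \Tb. By the symmetry in $p,q,r$ the essential case is $\sharp A=pq$, where \Ta\ demands $S_A=\{p,q\}$ (in particular $\Phi_r\nmid A(X)$) and \Tb\ reduces to the single divisibility $\Phi_{pq}\mid A(X)$. The difficulty is that $\Lambda$ is not assumed to carry any group structure, so divisibility cannot be read off directly; all I am given is the additive datum $(\Lambda-\Lambda)\setminus\{0\}\subseteq Z_A$ with $\sharp\Lambda=pq$. My plan is to pass to the Chinese Remainder decomposition $\Zpqr\cong\Z_p\times\Z_q\times\Z_r$, record the distribution of $A$ and of $\Lambda$ across the residue classes modulo each prime, and then bring in Lam and Leung's classification of vanishing sums of roots of unity \cite{ll}: every vanishing sum of $N$-th roots of unity has length a nonnegative integer combination of $p,q,r$ and is built from rotated copies of the basic relations $\sum_{j=0}^{p-1}\zeta_p^{\,j}=0$ and its analogues. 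Feeding into this both the relations encoding $\Phi_p\mid A(X)$, $\Phi_q\mid A(X)$ and the orthogonality relations carried by $\Lambda$ should rigidly constrain how $A$ spreads over the classes mod $p$, $q$, $r$, simultaneously excluding $r\in S_A$ (giving \Ta) and forcing the vanishing along the $pq$-layer that is exactly $\Phi_{pq}\mid A(X)$ (giving \Tb). I expect this conversion of unstructured additive orthogonality data into the multiplicative divisibility \Tb, through the Lam--Leung structure theorem, to be the main technical obstacle; the remaining cardinalities $p,q,r,pr,qr$ should then follow by the same mechanism together with the symmetric and trivial reductions.
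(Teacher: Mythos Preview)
Your reductions $(1)\Rightarrow(3)$ and $(3)\Rightarrow(2)$ are fine; the subgroup-tiling argument for $(3)\Rightarrow(2)$ via $H^{\perp}$ is a clean shortcut (the paper instead closes $(3)\Rightarrow(1)$ and then quotes \L aba for $(1)\Rightarrow(2)$).

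The problem is in your plan for $(2)\Rightarrow(1)$. You write that you would ``reduce at once to $\sharp A\in\{p,q,r,pq,pr,qr\}$'' and then treat $\sharp A=pq$ as the essential case. But for a spectral set this reduction is not available a priori: the assertion $\sharp A\mid N$ is precisely the content of \Ta, which is what you are trying to prove. Lam--Leung only gives $\sharp A=n_pp+n_qq+n_rr$ with $n_p,n_q,n_r\ge 0$, which does not exclude values such as $2p$ or $p+q$. Worse, even if one grants $\sharp A=pq$, nothing you have said forces $\Phi_p\mid A(X)$ and $\Phi_q\mid A(X)$; these are independent divisibility statements that must be extracted from the spectrum, not from the cardinality.

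The paper organises the argument differently, and this is the missing idea: it case-splits not on $\sharp A$ but on the set $S_A$, equivalently on $\sharp\big(\ZZ_A\cap\{pq,pr,qr\}\big)\in\{0,1,2,3\}$. From this input one \emph{derives} $\sharp A$ (so \Ta\ becomes the output, not the hypothesis), and \Tb\ is then checked. The engine that drives the nontrivial cases is a pair of lemmas exploiting the spectral duality $(A,B)\leftrightarrow(B,A)$ together with the prime-cycle description under the CRT isomorphism: from $p\in\ZZ_A$, $pr\notin\ZZ_A$, $pq\notin\ZZ_B$ one shows that $pA$ must contain an $r$-cycle, and the injectivity forced by $pq\notin\ZZ_B$ then yields the strict inequality $p>r$ and $q\in\ZZ_B$. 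Iterating this between $A$ and $B$ produces chains like $p>r$ and $r>p$, which collapse the bad configurations. Your proposal mentions CRT and Lam--Leung, but not this inequality mechanism; without it I do not see how you would rule out, for instance, the configuration $\ZZ_A\cap\{pq,pr,qr\}=\emptyset$ with $\sharp A>1$, or force $\sharp A=p$ when $\sharp(\ZZ_A\cap\{pq,pr,qr\})=1$. These are exactly the places where the paper's argument does real work (its Cases~2 and~3), and your sketch does not yet reach them.
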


In fact, Coven and Meyerowitz \cite{cm} proved that the conditions $\Ta+\Tb$ imply tiles, i.e. $(1) \Rightarrow (3)$, and that tiles satisfy the condition $\Ta$; \L aba \cite{l2} proved that conditions $\Ta+\Tb$ imply spectral sets, i.e. $(1) \Rightarrow(2)$; the implication $(3)\Rightarrow (1)$ follows from \L aba and Meyerowitz's comments on Tao's blog \cite{t2}. Our novel contribution here is to prove that spectral sets satisfy the condition $\Ta$ and $\Tb$, i.e. $(2)\Rightarrow(1)$. The method we used here is to analyse the structure of $p$-cycles, $q$-cycles and $r$-cycles by identifying the cyclic group $\Z_{pqr}$ with the product space $\Z_p\times \Z_q\times \Z_r$.

We organize the paper as follows. In Section \ref{Section: Prelimimaires}, we revisit the definitions of tiles and spectral sets in cyclic groups $\ZN$ and introduce prime-cycles in multi-sets. In Section \ref{Section:Prime-cycles}, we give an equivalent description of prime-cycles by identifying  $\Z_{p_1 p_2\cdots p_k}$ with $\mathbb{Z}_{p_1} \times \Z_{p_2} \times \cdots \Z_{p_k}$. In Section \ref{Section:Tiles=T1+T2}, we prove that a tile in $\Zppp$ satisfies $\Ta$ and $\Tb$. In Section \ref{Section:Spectral=T1+T2}, we prove that a spectral set in $\Zpqr$ satisfies $\Ta$ and $\Tb$.

\section{Preliminaries}\label{Section: Prelimimaires}
In this section, we first give some equivalent definitions of spectral sets and tiles in $\ZN$. We then study the zeros of mask polynomials. At the end of this section, we study the structure of vanishing sums of roots of unity. Throughout this paper we denote by $\omega_N=e^{2\pi i /N}$, for $N\ge 1$, which is a primitive $N$-th root of unity.
\subsection{Spectral sets in $\ZN$}\label{Spectral set and tile}
We recall that a subset $A\subset \ZN$ is said to be \textit{spectral} if there is a set $B\subset \ZN$ such that 
$$
\{\omega_N^{bx}: b\in B \}
$$
forms an orthogonal basis for $L^2(A)$. In such a case, the set $B$ is called a \textit{spectrum} of $A$ and the pair $(A,B)$ is called a \textit{spectral pair}. Since the dimension of $L^2(A)$ is $\sharp A$, the pair $(A ,B)$ being a spectral pair is equivalent to that $\sharp A=\sharp B$ and $\{\omega_N^{bx}: b\in B \}$ is orthogonal on $L^2(A)$, that is to say, 
\begin{equation}\label{spectral condition}
\sharp A=\sharp B ~\text{and}~\sum_{a\in A} \omega_N^{(b-b')a}=0, ~\forall b,b'\in B, b\not=b'.
\end{equation}
This also means that the complex matrix $M=\left(\omega_N^{ba}\right)_{b\in B, a\in A}$ is a complex Hadamard matrix, i.e. $M\overline{M}^{T}=(\sharp A)I$ where $A^T$ is the transpose of $A$ and $I$ is the identity matrix. Since $\sharp A=\sharp B$, it follows that $\overline{M}^TM=(\sharp B)I$. This implies that $B$ is also a spectral set and $A$ is its spectrum.

Obviously, the mask polynomial of $A$ is indeed the Fourier transform of the indicator function of $A$ as follows
\begin{equation}\label{the Fourier transform and Mask polynomial}
\widehat{1}_A(n)=\sum_{a\in A}\omega_N^{-an} =A(\omega_N^{-n}), ~\forall~n\in \ZN.
\end{equation}
Denote the zeros of the mask polynomial $A(X)$ by $$\mathcal{Z}_A=\{n\in \mathbb{Z}_N: A(\omega_N^n)=0 \}.$$ We restate the above equivalent definition of spectral sets as follows. 
\begin{proposition}\label{Equivalence of spectral set}
	Let $A, B\subset \ZN$. The following are equivalent.
	\begin{itemize}
		\item [(1)] The pair $(A,B)$ is a spectral pair.
		\item [(2)] The pair $(B,A)$ is a spectral pair.
		\item [(3)] $\sharp A=\sharp B$; $(B-B)\setminus \{0\}\subset \mathcal{Z}_A$.
	\end{itemize}
\end{proposition}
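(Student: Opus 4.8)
The plan is to reduce both equivalences to a single computation, namely the observation that the orthogonality sums appearing in \eqref{spectral condition} are exactly values of the mask polynomial, and then to exploit the built-in symmetry of the complex Hadamard condition.

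First I would establish the equivalence (1) $\Leftrightarrow$ (3). Reading off the criterion in \eqref{spectral condition} and using \eqref{the Fourier transform and Mask polynomial} in the form $\sum_{a\in A}\omega_N^{(b-b')a}=A(\omega_N^{b-b'})$, the orthogonality relation $\sum_{a\in A}\omega_N^{(b-b')a}=0$ for $b\ne b'$ says precisely that $A(\omega_N^{b-b'})=0$, i.e. that $b-b'\in\ZZ_A$. Letting $b,b'$ range over all distinct pairs in $B$, this is exactly the inclusion $(B-B)\setminus\{0\}\subset\ZZ_A$. Together with the cardinality condition $\sharp A=\sharp B$, which is common to both statements, this gives (1) $\Leftrightarrow$ (3) with essentially no further computation.

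Next, for (1) $\Leftrightarrow$ (2), I would use the complex Hadamard matrix $M=\left(\omega_N^{ba}\right)_{b\in B,\, a\in A}$ introduced just before the statement. Condition (1) is equivalent to $M\overline{M}^{T}=(\sharp A)I$. Because $\sharp A=\sharp B$, the matrix $M$ is square, so a one-sided inverse is automatically two-sided: from $M\overline{M}^{T}=(\sharp A)I$ one gets $\overline{M}^{T}=(\sharp A)M^{-1}$ and hence $\overline{M}^{T}M=(\sharp B)I$. Computing the $(a,a')$ entry of $\overline{M}^{T}M$ gives $\sum_{b\in B}\omega_N^{(a'-a)b}=B(\omega_N^{a'-a})$, so the vanishing of the off-diagonal entries is precisely the orthogonality of $\{\omega_N^{ax}:a\in A\}$ on $L^2(B)$; combined with $\sharp B=\sharp A$ this is the assertion that $(B,A)$ is a spectral pair. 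Running the same argument with the roles of $A$ and $B$ exchanged yields the converse, so (1) $\Leftrightarrow$ (2), and chaining the two equivalences also delivers (2) $\Leftrightarrow$ (3).

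I do not expect a serious obstacle here, since the proposition is largely a bookkeeping reformulation of the definitions. The one point requiring care is tracking the exponent: one must check that the orthogonality sum produces $A(\omega_N^{b-b'})$ rather than $A(\omega_N^{-(b-b')})$, which is harmless because $(B-B)\setminus\{0\}$ is symmetric under negation, so the membership $b-b'\in\ZZ_A$ is unaffected. The genuine content is only the two structural observations that the orthogonality sums are evaluations of the mask polynomial and that the defining identity of a square complex Hadamard matrix is left-right symmetric.
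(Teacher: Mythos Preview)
Your proposal is correct and follows essentially the same approach as the paper: the paper itself establishes (1)$\Leftrightarrow$(2) via the square complex Hadamard matrix identity $M\overline{M}^T=(\sharp A)I \Leftrightarrow \overline{M}^T M=(\sharp B)I$, and then restates the orthogonality sums as values of the mask polynomial to obtain (1)$\Leftrightarrow$(3). Your handling of the sign in the exponent via the symmetry of $(B-B)\setminus\{0\}$ is the correct way to reconcile \eqref{the Fourier transform and Mask polynomial} with the definition of $\ZZ_A$.
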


\subsection{Tiles in $\ZN$}
Recall that a subset $A\subset \ZN$ is said to be a \textit{tile} if there is a set $T\subset \ZN$ such that 
$$
\bigsqcup_{t\in T} (A+t)=\ZN.
$$
In such case, the set $T$ is called a \textit{tiling complement} of $A$ and the pair $(A,T)$ is called a \textit{tiling pair}. Using the language of mask polynomials, we have the following equivalent definitions of tiles in $\ZN$.

\begin{lem}[Lemma 1.3, \cite{cm}]\label{lem:tiling equivalent}
	Let $N$ be a positive integer. Let $A,B$ be multi-sets in $\ZN$. Then the following statements are equivalent. Each forces $A$ and $B$ to be sets such that $(\sharp A)(\sharp B)=N$.
	\begin{itemize}
		\item [(1)] $(A,B)$ is a tiling pair.
		\item [(2)] $A\oplus B=\ZN$.
		\item [(3)] $A(X)B(X)=1+X+X^2+\cdots+X^{N-1}\mod X^N-1.$
	\end{itemize}
\end{lem}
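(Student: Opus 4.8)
The plan is to funnel all three statements through a single combinatorial quantity, the representation-counting function. For multi-sets $A,B$ in $\ZN$ with multiplicity functions $a\mapsto m_a(A)$ and $t\mapsto m_t(B)$, set
$$
r(x)=\sum_{\substack{a,t\in\ZN\\ a+t\equiv x\ (N)}} m_a(A)\,m_t(B),\qquad x\in\ZN.
$$
The first, routine, observation is that $r(x)$ is exactly the coefficient of $X^x$ in the product $A(X)B(X)$ reduced modulo $X^N-1$: this is just the definition of polynomial multiplication together with the identification $X^N\equiv 1$. Hence statement (3), which asserts $A(X)B(X)\equiv 1+X+\cdots+X^{N-1}\pmod{X^N-1}$, is literally the assertion that $r(x)=1$ for every $x\in\ZN$.

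Next I would unwind the two geometric statements into the same condition. Unpacking the disjoint-union symbol, statement (1) says that every $x\in\ZN$ is covered exactly once by the translates $A+t$, $t\in B$, counted with the multiplicities of $A$ and $B$; in multiplicity bookkeeping this is precisely $r(x)=1$ for all $x$. Likewise statement (2), the direct-sum identity $A\oplus B=\ZN$, asserts that each $x$ has a unique decomposition $x=a+t$ with $a\in A$, $t\in B$ (again counted with multiplicity), which is once more $r(x)=1$ for all $x$. Thus (1), (2) and (3) are three readings of the single condition $r\equiv 1$, and their equivalence is immediate once everything is phrased through $r$.

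It remains to establish the final clause, that any of the conditions forces $A$ and $B$ to be genuine sets with $(\sharp A)(\sharp B)=N$; this is the one point where a small argument beyond bookkeeping is needed, and I regard it as the crux. Assuming $r\equiv 1$, both $A$ and $B$ are nonempty, since otherwise $A(X)B(X)=0$. Fix any $a$ in the support of $A$ and any $t$ in the support of $B$. As $r$ is a sum of non-negative integers, the single term $m_a(A)\,m_t(B)$ is a lower bound for $r(a+t)$, so $1=r(a+t)\ge m_a(A)\,m_t(B)\ge 1$, which forces $m_a(A)=m_t(B)=1$. Letting $a$ and $t$ range over the supports shows every multiplicity equals $1$, so $A$ and $B$ are sets. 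Finally I would sum the identity over all $x$ to get
$$
N=\sum_{x\in\ZN} r(x)=\Big(\sum_{a} m_a(A)\Big)\Big(\sum_{t} m_t(B)\Big)=(\sharp A)(\sharp B),
$$
which yields the cardinality relation and completes the argument.
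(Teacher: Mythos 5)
Your proof is correct. Note that the paper itself gives no proof of this statement---it is imported verbatim as Lemma 1.3 of Coven and Meyerowitz \cite{cm}---and your argument via the representation-counting function $r$ (identifying $r$ with the coefficients of $A(X)B(X) \bmod X^N-1$, reading (1) and (2) as $r\equiv 1$ with multiplicity bookkeeping, and using $1=r(a+t)\ge m_a(A)m_t(B)$ to force all multiplicities to equal $1$ before summing to get $(\sharp A)(\sharp B)=N$) is precisely the standard proof of that lemma, so there is nothing of substance to compare.
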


As we have mentioned before, Coven and Meyerowitz proved that a set in $\ZN$ satisfying the conditions $\Ta$ and $\Tb$ is a tile, which shows $(1)\Rightarrow (3)$ in Theorem \ref{main thm}. 

\begin{theorem}[Theorem A, \cite{cm}]\label{thm:Coven and Meyerowitz}
	Let $A\subset \ZN$. If $A$ satisfies $(T1)$ and $(T2)$, then $A$ is a tile of $\ZN$.
\end{theorem}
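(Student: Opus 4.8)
The plan is to produce an explicit tiling complement $B$ and to verify that $(A,B)$ is a tiling pair in the sense of Lemma~\ref{lem:tiling equivalent}, via the Fourier criterion. Writing $N=\prod_i p_i^{a_i}$, recall that $\frac{X^N-1}{X-1}=\prod_{1<d\mid N}\Phi_d(X)$, that the $\Phi_d$ are pairwise coprime, and that $\Phi_{p^k}(1)=p$ while $\Phi_d(1)=1$ when $d$ has at least two prime factors. By \eqref{the Fourier transform and Mask polynomial}, for $\xi\neq 0$ the value $\widehat{1}_A(\xi)=A(\omega_N^{-\xi})$ vanishes exactly when $\Phi_d\mid A$, where $d=N/\gcd(\xi,N)>1$. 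Hence, to conclude that $A\oplus B=\ZN$, it suffices to build a genuine set $B$ with $\sharp A\cdot\sharp B=N$ such that for every $1<d\mid N$ one has $\Phi_d\mid A$ or $\Phi_d\mid B$: then $\widehat{1}_A(\xi)\widehat{1}_B(\xi)=0$ for all $\xi\neq 0$ and equals $N$ at $\xi=0$, so that $1_A*1_B\equiv 1$.

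First I would record which cyclotomic factors are already supplied by $A$. Writing $d=\prod_{p\mid d}p^{e_p}$, condition \Tb\ guarantees $\Phi_d\mid A$ whenever every prime-power component $p^{e_p}$ lies in $S_A$. So $B$ must account for all remaining $d$, namely those with at least one component $p^{e_p}\notin S_A$. The naive guess $B(X)=\prod_{s\in S\setminus S_A}\Phi_s(X)$ fails here, and this is the crux: such a $B$ carries only prime-power cyclotomic factors, whereas the uncovered $d$ are typically composite, so $\Phi_d\nmid AB$.

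To repair this I would build $B$ through the Chinese Remainder isomorphism $\ZN\cong\prod_i\mathbb{Z}_{p_i^{a_i}}$. For each prime $p$ let $D_p=\{k:1\le k\le a_p,\ p^k\in S\setminus S_A\}$, and let $B_p\subseteq\mathbb{Z}_{p^{a_p}}$ be the set with mask $\prod_{k\in D_p}\Phi_{p^k}$; uniqueness of base-$p$ digits shows this is a genuine $0$--$1$ set of size $p^{\sharp D_p}$. Set $B=\bigoplus_p B_p$ (direct sum under CRT). The size works out by \Ta: since $\sharp A=\prod_{s\in S_A}\Phi_s(1)=\prod_p p^{\sharp\{k:\,p^k\in S_A\}}$, we get $\sharp A\cdot\sharp B=\prod_p p^{a_p}=N$. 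For the coverage, the key point is that evaluating a CRT-product mask at a primitive $d$-th root of unity factors across coordinates: $B(\zeta)=\prod_p B_p(\zeta_p)$ with $\zeta_p$ a primitive $p^{e_p}$-th root in the $p$-th coordinate, so $\Phi_d\mid B$ holds precisely when some component satisfies $p^{e_p}\in D_p$, i.e. $p^{e_p}\notin S_A$. This is exactly the set of $d$ not covered by $A$.

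Combining the two coverage statements, every $1<d\mid N$ has $\Phi_d\mid A$ or $\Phi_d\mid B$, and the sizes multiply to $N$; the Fourier computation then yields $A\oplus B=\ZN$. I expect the main difficulty to be the construction of $B$, precisely the need to manufacture the composite cyclotomic factors that $A$ omits: the CRT product is what converts the per-prime prime-power factors into the required composite factors automatically, and \Ta\ and \Tb\ are exactly the inputs that make the size count and the factor coverage line up.
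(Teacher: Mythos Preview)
The paper does not give a proof of this statement; it is quoted as Theorem~A of Coven--Meyerowitz \cite{cm} and used as a black box. Your argument is correct and is essentially the original Coven--Meyerowitz construction: build an explicit tiling complement prime by prime from the prime powers in $S\setminus S_A$, assemble via the Chinese Remainder Theorem, and then check that every $\Phi_d$ with $1<d\mid N$ divides $A(X)B(X)$, with \Ta\ furnishing the cardinality identity and \Tb\ covering exactly those $d$ whose prime-power constituents all lie in $S_A$. One small remark: the formula ``$B(\zeta)=\prod_p B_p(\zeta_p)$'' should be read as the factorisation of the Fourier transform of $1_B$ through the CRT decomposition of the dual group rather than as a literal mask-polynomial identity (the naive coordinate identification introduces a unit twist in each factor), but since units do not affect vanishing the argument goes through unchanged.
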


\subsection{Zeros of mask polynomial}
Let $N\ge 1$. Let $\ZN^\star$ denote the group of all invertible elements in $\ZN$. It is classical that the Galois group
$
\text{Gal}(\mathbb{Q}(\omega_N)/\mathbb{Q})
$ is isomorphic to $\ZN^\star$ and the isomorphism is induced by $\sigma(\omega_N)=\omega_N^g$ for $g\in \ZN^\star$.
It follows that we have a natural action of $\text{Gal}(\mathbb{Q}(\omega_N)/\mathbb{Q})$ on the values of the mask polynomial of $A$, defined by
$$
\sigma(A(X))=A(X^g),
$$
for $\sigma\in \text{Gal}(\mathbb{Q}(\omega_N)/\mathbb{Q})$ which is determined by $\sigma(\omega_N)=\omega_N^g$ for $g\in \Z_N^\star$. 

The following lemma shows that we only need to study the zeros of $A(\omega_N^d)=0$ by restricting $d$ as a divisor of $N$. 
\begin{lem}\label{Lem: zeros N/p^d of mask polynomial}
	Let $A$ be a subset in $\ZN$. Let $p$ be a prime factor of $N$. Let $a\in \ZN$. Then we have the following.
	\begin{itemize}
		\item [(1)] $A(\omega_N^a)=0$ if and only if $A(\omega_N^{ag})=0$ for any $g\in \ZN^\star$.
		\item [(2)] For any $d\mid N$, we have the equivalence 
		$$
		A(\omega_N^d)=0 \Longleftrightarrow \Phi_{N/d}(X)|A(X) \mod X^N-1.
		$$
		\item [(3)] Suppose that $\sharp \{p^d\in \ZN: A(\omega_N^{N/p^d})=0\}=k$. Then $p^k$ divides $\sharp A$.
	\end{itemize}
	
\end{lem}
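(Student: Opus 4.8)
The plan is to handle the three parts in order, since parts (1) and (2) supply the Galois-theoretic and cyclotomic facts that make part (3)---the only part carrying real content---a short computation.

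For part (1), I would invoke the isomorphism $\mathrm{Gal}(\mathbb{Q}(\omega_N)/\mathbb{Q}) \cong \mathbb{Z}_N^\star$ recalled just above the lemma. Given $g \in \mathbb{Z}_N^\star$, let $\sigma_g$ be the automorphism determined by $\sigma_g(\omega_N) = \omega_N^g$. Because $A(X)$ has integer (hence $\sigma_g$-fixed) coefficients, $\sigma_g(A(\omega_N^a)) = A(\omega_N^{ag})$. Since a field automorphism sends $0$ to $0$, the hypothesis $A(\omega_N^a) = 0$ forces $A(\omega_N^{ag}) = 0$ for every $g$; the reverse implication is immediate on specializing to $g = 1$ (or applying $\sigma_{g^{-1}}$).

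For part (2), the key observation is that when $d \mid N$ the element $\omega_N^d = e^{2\pi i/(N/d)}$ is a primitive $(N/d)$-th root of unity, whose minimal polynomial over $\mathbb{Q}$ is exactly $\Phi_{N/d}(X)$. For the forward implication, $A(\omega_N^d) = 0$ says that $\Phi_{N/d}$ divides $A$ in $\mathbb{Q}[X]$; as $\Phi_{N/d}$ is monic with integer coefficients, Gauss's lemma upgrades this to divisibility in $\mathbb{Z}[X]$, hence modulo $X^N - 1$. For the converse I would write $A(X) = \Phi_{N/d}(X)Q(X) + (X^N - 1)R(X)$ and evaluate at $X = \omega_N^d$, where both $\Phi_{N/d}(\omega_N^d)$ and $(\omega_N^d)^N - 1$ vanish. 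One may also note that since $\Phi_{N/d} \mid X^N - 1$, divisibility modulo $X^N-1$ and divisibility in $\mathbb{Z}[X]$ coincide for our polynomial $A$ of degree $\le N-1$, so the ``$\bmod\ X^N-1$'' is harmless.

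Part (3) is where the argument concentrates. Let $a$ be the exponent of $p$ in $N$, so that $p^a \mid N$ but $p^{a+1} \nmid N$. By part (2), for each $1 \le d \le a$ we have $A(\omega_N^{N/p^d}) = 0$ if and only if $\Phi_{p^d}(X) \mid A(X)$, because $\omega_N^{N/p^d}$ is a primitive $p^d$-th root of unity; thus $k$ counts the exponents $d$ for which $\Phi_{p^d} \mid A$. The crux is that the corresponding cyclotomic factors are pairwise coprime---they are distinct irreducible polynomials in $\mathbb{Q}[X]$---so their product, which is monic, divides $A(X)$ in $\mathbb{Z}[X]$ by Gauss's lemma. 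Writing $A(X) = \left(\prod_{\Phi_{p^d}\mid A}\Phi_{p^d}(X)\right)Q(X)$ with $Q \in \mathbb{Z}[X]$ and evaluating at $X = 1$, the identity $\Phi_{p^d}(1) = p$ gives $\sharp A = A(1) = p^k Q(1)$, whence $p^k \mid \sharp A$. I expect the only genuine obstacle to be the bookkeeping in part (3): confirming that distinct cyclotomic polynomials are coprime so that their \emph{product} (not merely each factor) divides $A$, together with the evaluation $\Phi_{p^d}(1) = p$. Everything else reduces to the Galois action and the identification of $\omega_N^d$ as a primitive root of unity.
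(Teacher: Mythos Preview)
Your proposal is correct and follows essentially the same route as the paper: part~(1) via the Galois action $\sigma_g(A(\omega_N^a))=A(\omega_N^{ag})$, part~(2) via the fact that $\Phi_{N/d}$ is the minimal polynomial of $\omega_N^d$, and part~(3) by observing that the relevant cyclotomic factors are coprime so their product divides $A(X)$ and then evaluating at $X=1$ using $\Phi_{p^d}(1)=p$. The paper's proof is terser (it phrases~(3) through the set $S_A$ rather than singling out powers of $p$, and omits the Gauss's-lemma and coprimality remarks you spell out), but the substance is identical.
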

\begin{proof}
	For $g\in \Z_N^\star$, let $\sigma\in \text{Gal}(\mathbb{Q}(\omega_N)/\mathbb{Q})$ which is determined by $g$. It follows that 
	$
	\sigma(A(\omega_N^a))=A(\omega_N^{ag}).
	$
	Since $\sigma(0)=0$, we deduce $(1)$. Since the cyclotomic polynomial of order $N/d$ is the monic minimal polynomial of $\omega_N^d$, we obtain $(2)$.
	By definition, $\prod_{s\in S_A} \Phi_s(X)$ divides $A(X)$ in $\mathbb{Z}[X]$. Putting $X=1$, we get $(3)$.
\end{proof} 

The above lemma shows that $\ZZ_A$ is invariant by multiplying any element in $\ZN^\star$.

\subsection{Vanishing sums of roots of unity}
Let $d$ be a factor of $N$. Following \cite{mk}, a \textit{$d$-cycle} is a coset of the cyclic subgroup of order $N/d$ in $\ZN$, that is, a set of the form
$$
\{j, j+N/d, j+2N/d, \dots, j+(d-1)N/d \},
$$ 
for some $j\in \ZN$.
Moreover, we say that such $d$-cycle is a \textit{prime-cycle} if $d$ is a prime. For interpretation of cycles, one may refer to \cite{st} where Steinberger used the tensor product representation (given here in Lemma \ref{lem:goup isomorphism}) to provide a useful geometric interpretation of cycles.

Let $A$ be a multi-set in $\ZN$. Let $n\ge 1$. Denote by $n\cdot A$ (or sometimes $nA$) the multi-set consisting of elements $na\in \ZN$ counting the multiplicity  for all $a\in A$. We are concerned whether the multi-set $n\cdot A$ is a union of prime-cycles. 
If we assume that $N/n$ has at most two prime divisors, say $p$ and $q$, the following lemma tells us that $n\cdot A$ must be a union of $p$-cycles and $q$-cycles whenever $A(\omega_N^n)=0$.

\begin{proposition}[Lemma 2.5, Proposition 2.6, \cite{mk}] \label{VanishingTwoPolynomial}
	Let $n$ be a factor of $N$ such that $N/n$ has at most two prime divisors, say $p$ and $q$. If $A(\omega_N^n)=0$, then
	$$
	A(X^n)\equiv P(X^n)\Phip+Q(X^n)\Phiq \mod X^N-1,
	$$
	where $P$ and $Q$ have nonnegative coefficients. Moreover, if $N/n$ has only one prime divisor, say $p$, then $Q\equiv 0$; if $A(\omega_N^{nq^b})\not=0$ for some $b>0$, then $Q\not\equiv 0$.
\end{proposition}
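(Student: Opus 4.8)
The plan is to reduce the statement in $\ZN$ to one about a vanishing sum of $M$-th roots of unity, where $M=N/n$, and then to feed it into the structure theory of Lam and Leung for moduli with at most two prime factors. Put $M=N/n$, so that $\omega_N^n=\omega_M$ and, by hypothesis, $M=p^\alpha q^\beta$ has at most the two prime divisors $p$ and $q$. First I would collapse $A$ modulo $M$: setting $c_j=\sharp\{a\in A:a\equiv j\pmod M\}$ for $0\le j\le M-1$ and $\tilde A(Y)=\sum_{j=0}^{M-1}c_jY^j$, the relation $na\bmod N=n(a\bmod M)$ gives
$$A(X^n)\equiv \tilde A(X^n)=\sum_{j=0}^{M-1}c_jX^{nj}\pmod{X^N-1}.$$
The polynomial $\tilde A$ has nonnegative coefficients, degree at most $M-1$, and $\tilde A(\omega_M)=\sum_{a\in A}\omega_M^a=A(\omega_N^n)=0$; thus the problem is transported to a nonnegative vanishing sum of $M$-th roots of unity.

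The key step is the two-prime structure theorem: since $M$ has at most two prime factors, every nonnegative integer vanishing sum $\sum_j c_j\omega_M^j=0$ is a nonnegative integer combination of rotated prime-cycles, so that
$$\tilde A(Y)\equiv P(Y)\,\Phi_p(Y^{M/p})+Q(Y)\,\Phi_q(Y^{M/q})\pmod{Y^M-1}$$
with $P,Q$ having nonnegative coefficients. This is exactly the point at which the hypothesis ``at most two primes'' is indispensable (for three or more primes there are minimal vanishing sums that are not nonnegative combinations of single prime-cycles), and it is the heart of Lam and Leung's analysis in \cite{ll}; I regard proving this decomposition as the main obstacle. One may either cite \cite{ll} directly or argue by induction on $\sharp A=\sum_j c_j$, at each stage removing one prime-cycle whose deletion leaves a nonnegative vanishing sum, the two-prime hypothesis being what guarantees such a removable cycle always exists. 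Substituting $Y=X^n$ and using $nM/p=N/p$, $nM/q=N/q$ and $nM=N$ turns a congruence modulo $Y^M-1$ into one modulo $X^N-1$, whence
$$A(X^n)\equiv P(X^n)\,\Phip+Q(X^n)\,\Phiq\pmod{X^N-1},$$
which is the asserted decomposition.

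Finally I would dispose of the two supplementary assertions. If $M$ has only the single prime divisor $p$, the structure theorem in $\mathbb{Z}_{p^\alpha}$ yields only $p$-cycles (the sole minimal nonnegative vanishing sums of $p^\alpha$-th roots of unity being the rotated $p$-cycles), so no $\Phi_q$-term occurs and $Q\equiv 0$. The last claim I would prove by contraposition: assuming $Q\equiv 0$, so that $A(X^n)\equiv P(X^n)\,\Phip\pmod{X^N-1}$, I evaluate both sides at $X=\omega_N^{q^b}$ for an arbitrary $b>0$. Then the left side equals $A(\omega_N^{nq^b})$, while on the right $\Phip$ becomes $\Phi_p(\omega_p^{q^b})$, which vanishes because $\omega_p^{q^b}$ is a primitive $p$-th root of unity (as $\gcd(q^b,p)=1$). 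Hence $A(\omega_N^{nq^b})=0$ for every $b>0$; contrapositively, $A(\omega_N^{nq^b})\neq 0$ for some $b>0$ forces $Q\not\equiv 0$, as required.
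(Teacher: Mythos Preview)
The paper does not supply its own proof of this proposition; it is quoted verbatim from \cite{mk} (Lemma~2.5 and Proposition~2.6 there), so there is nothing in the present paper to compare your argument against. Your proof is correct and is the natural one: collapse modulo $M=N/n$ to obtain a nonnegative vanishing sum of $M$-th roots of unity, invoke the two-prime structure theorem (R\'edei/de~Bruijn, subsumed in Lam--Leung~\cite{ll}) to write it as a nonnegative combination of $p$- and $q$-cycles, and lift back via $Y\mapsto X^n$; the contrapositive handling of the final clause is also clean, since evaluating the congruence at the $N$-th root of unity $\omega_N^{q^b}$ is legitimate.
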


Obviously, the converse of the previous proposition also holds, even for $N/n$ having more than two prime divisors. However, the previous proposition is not true when $N/n$ has at least three prime divisors.
For example, considering the multi-set $A$ defined by the following mask polynomial in $\ZN$ where $N=pqr$ with $p,q,r$ distinct primes,
\begin{align*}
A(X)=&(X^{qr}+X^{2qr}+\dots+X^{(p-1)qr})(X^{pr}
+X^{2pr}+\dots+X^{(q-1)pr})\\
&+(X^{pq}+X^{2pq}+\dots+X^{(r-1)pq}),
\end{align*} 
we have $A(\omega_N)=0$ but $A$ cannot be expressed as a union of $p$-, $q$- and $r$-cycles \cite{ll}.

Nevertheless, for general $N>1$, Lam and Leung \cite{ll} proved that if $A(\omega_N)=0$, then $\sharp A$ is a nonnegative integer linear combination of the prime divisors of $N$.
\begin{theorem}[Main theorem, \cite{ll}]\label{Lem:Lam and Leung}
	Let $N$ be a positive integer. If $A(\omega_N)=0$, then there exist integers $n_p\ge 0$ for all $p$ prime with $p~|~N$ such that $\sharp A=\sum_{p|N} n_p p$.
\end{theorem}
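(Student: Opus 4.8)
The plan is to translate the statement into a cyclotomic divisibility and then prove the numerical conclusion by induction. Writing $A(X)=\sum_{a}c_aX^a$ with $c_a\ge 0$ and $\deg A<N$, the hypothesis $A(\omega_N)=0$ is equivalent, by Lemma \ref{Lem: zeros N/p^d of mask polynomial}(2) applied with $d=1$, to $\Phi_N(X)\mid A(X)$ in $\mathbb{Z}[X]$. Since $\sharp A=A(1)$, the goal is exactly to show that $A(1)$ lies in the numerical semigroup generated by the primes dividing $N$. I would run an induction on $N$ (primarily on the number of prime factors of $N$ counted with multiplicity, and secondarily on the weight $\sharp A$), with base case $N$ a prime power.

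For the base case $N=p^a$ one has $\Phi_{p^a}(X)=\Phi_p(X^{p^{a-1}})$, so $A(X)=\Phi_p(X^{p^{a-1}})D(X)$ with $\deg D<p^{a-1}$. The $p$ blocks $X^{kp^{a-1}}D(X)$ for $0\le k\le p-1$ occupy disjoint ranges of degrees, so the coefficients of $A$ are literally $p$ copies of those of $D$; hence $A\ge 0$ forces $D\ge 0$, and $A(X)=\sum_j D_j X^j\,\Phi_p(X^{p^{a-1}})$ exhibits $A$ as a nonnegative combination of full $p$-cycles. Thus $\sharp A=p\cdot D(1)\in p\mathbb{Z}_{\ge 0}$, which settles the prime-power case cleanly.

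For the inductive step, fix a prime $p\mid N$ and let $H$ be the subgroup of order $p$. The natural first move is to peel off full $p$-cycles: for each coset $C=j+H$ subtract $\bigl(\min_{a\in C}c_a\bigr)$ copies of $X^j\Phi_p(X^{N/p})$, each a vanishing sum of weight $p$. This produces a nonnegative $B$ with $B(\omega_N)=0$, weight $\sharp A-p\cdot(\text{nonnegative})$, and a zero coefficient in every $H$-coset; if some cycle was removed the weight drops strictly and the secondary induction applies to $B$. The genuinely hard case is an irreducible configuration in which, for every prime, no $p$-cycle can be peeled — the three-prime example in the text is exactly of this type, which shows that one cannot hope to reduce everything to prime cycles. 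For these I would use the factorization $\ZN\cong\mathbb{Z}_{p^a}\times\mathbb{Z}_M$ with $M=N/p^a$ together with the linear disjointness of $\mathbb{Q}(\omega_{p^a})$ and $\mathbb{Q}(\omega_M)$: grouping the sum by its $p^a$-component writes $0=\sum_{\alpha=0}^{p^a-1}\omega_{p^a}^{\alpha}T_\alpha$ with each $T_\alpha$ a nonnegative combination of $M$-th roots of unity, and the disjoint-support structure of $\Phi_{p^a}$ over $\mathbb{Q}(\omega_M)[X]$ forces the $T_\alpha$ to repeat with period $p^{a-1}$ and to obey vanishing relations of order dividing $M$; applying the inductive hypothesis to these strictly smaller vanishing subsums and summing their weights yields the desired representation of $\sharp A$.

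The main obstacle is precisely this last descent: decoupling one prime from the rest while preserving nonnegativity. The peeling of $p$-cycles is elementary and the prime-power base case is transparent, but in the entangled case the individual $T_\alpha$ are not themselves zero, so one must extract genuine nonnegative vanishing subsums of strictly smaller modulus from the coupled relation and control their weights. This is the technical heart of the argument, and the step I expect to demand the most care.
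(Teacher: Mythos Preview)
The paper does not prove this theorem; it is quoted from Lam and Leung \cite{ll} as an external result, so there is no in-paper argument to compare against.

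Your prime-power base case and the peeling of full $p$-cycles are correct. The gap is in the entangled case, and it is a missing idea rather than missing care. From $\Phi_{p^a}\mid\sum_\alpha T_\alpha X^\alpha$ over $\mathbb{Q}(\omega_M)[X]$ you correctly deduce that $T_{j+kp^{a-1}}$ is independent of $k$ as an element of $\mathbb{Q}(\omega_M)$. But equality of \emph{values} of two nonnegative $M$-th-root sums does not produce any nonnegative vanishing sum of $M$-th roots: the only relation you obtain is $T_{j+kp^{a-1}}-T_j=0$ in $\mathbb{Q}(\omega_M)$, and this has mixed-sign coefficients, so the inductive hypothesis (which requires nonnegativity) cannot be applied to it. Your phrase ``obey vanishing relations of order dividing $M$'' has no content beyond these signed differences, and the three-prime example quoted in the paper shows precisely that such a configuration need not split into nonnegative vanishing pieces of smaller modulus at all. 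If one tries to salvage the count by replacing every slice $T_{j+kp^{a-1}}$ with the fixed slice $T_j$, the new configuration is a union of $p$-cycles and hence has weight in $p\,\mathbb{Z}_{\ge 0}$; but the discrepancy between $\sharp A$ and that weight is a sum of differences of weights of equal-value nonnegative $M$-sums, and when $M$ has at least two prime factors such differences can be arbitrary integers, so this places no constraint on $\sharp A$. Bridging ``equal values in $\mathbb{Q}(\omega_M)$'' to ``weight lies in the semigroup generated by the primes of $N$'' is exactly the substance of \cite{ll}, whose argument proceeds through a structural analysis of minimal vanishing sums that your outline does not supply.
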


\section{Prime-cycles in $\Z_N$ for $N$ square free}\label{Section:Prime-cycles}
In this section, we identify the cyclic group $\Z_N$ for $N=p_1p_2\cdots p_k$ square free,  with the product space $\mathbb{Z}_{p_1} \times \Z_{p_2} \times \cdots \Z_{p_k}$ and show the equivalent description of prime cycles in $\Z_N$ by using their representations in  $\mathbb{Z}_{p_1} \times \Z_{p_2} \times \cdots \Z_{p_k}$.

The following lemma is classical and of independent interest. We provide the proof in order to make our paper self-contained. 

\begin{lem}\label{lem:goup isomorphism}
	Let $N=p_1p_2\cdots p_k$. Then there is a group isomorphism 
	\begin{align*}
	\phi: (\ZN,+) & \longrightarrow  (\mathbb{Z}_{p_1} \times \Z_{p_2} \times \cdots \Z_{p_k},+),\\
	x &\longmapsto ~~~ (x_1, x_2, \dots, x_k),	
	\end{align*}
	where $x_j\equiv x \mod p_j$ for all $1\le j\le k$.
\end{lem}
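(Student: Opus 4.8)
The statement to prove is the classical Chinese Remainder Theorem realized as a group isomorphism, so the plan is to verify in turn that $\phi$ is well defined, that it is a group homomorphism, and finally that it is a bijection. First I would check well-definedness: the formula defining $\phi$ depends a priori on the chosen integer representative of $x\in\ZN$, so I would observe that if $x\equiv x' \pmod N$, then $N\mid (x-x')$, and since each $p_j$ divides $N=p_1p_2\cdots p_k$, we get $p_j\mid (x-x')$, i.e. $x_j=x_j'$ in $\Z_{p_j}$ for every $j$. Hence $\phi(x)$ is independent of the representative.

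Next I would verify the homomorphism property, which is immediate from the compatibility of reduction with addition: for $x,y\in\ZN$ and each $j$ we have $(x+y)\bmod p_j = (x\bmod p_j)+(y\bmod p_j)$ in $\Z_{p_j}$, so that $\phi(x+y)=\phi(x)+\phi(y)$ coordinatewise. This step is purely formal.

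The substantive step is injectivity, which I would establish by computing the kernel. Suppose $\phi(x)=(0,0,\dots,0)$, so that $p_j\mid x$ for every $1\le j\le k$. Here is the crux of the argument, and the one place requiring care: because $p_1,\dots,p_k$ are \emph{distinct} primes, they are pairwise coprime, and a standard induction shows that if pairwise coprime integers each divide $x$, then so does their product. Therefore $N=p_1p_2\cdots p_k$ divides $x$, which means $x\equiv 0 \pmod N$ and the kernel is trivial. I expect this pairwise-coprimality step to be the main (though mild) obstacle, since it is exactly where the squarefree hypothesis enters and where one must avoid circularly invoking the CRT one is proving.

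Finally I would conclude bijectivity by a cardinality count: the domain $\ZN$ and the codomain $\mathbb{Z}_{p_1}\times\cdots\times\Z_{p_k}$ are both finite of size $N=p_1p_2\cdots p_k$, so an injective homomorphism between them is automatically surjective, hence an isomorphism. (Alternatively, one could exhibit an explicit inverse using the idempotents $e_j$ satisfying $e_j\equiv\delta_{ij}\pmod{p_i}$, but the counting argument is cleaner and suffices.) This completes the proof.
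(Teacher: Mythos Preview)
Your proof is correct and follows essentially the same route as the paper: check that $\phi$ is a homomorphism, show the kernel is trivial using that the $p_j$ are distinct primes, and conclude bijectivity. The only small difference is that for surjectivity the paper appeals to the Chinese Remainder Theorem while you use the (cleaner, and more self-contained) cardinality argument; your added well-definedness check is a nice touch the paper leaves implicit.
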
 
\begin{proof}
	Let $x,y\in \ZN$. Denote by $\phi(x)=(x_1, x_2, \dots, x_k)$ and $\phi(y)=(y_1, y_2, \dots, y_k)$. We observe that 
	$$
	x+y \equiv x_j+y_j \mod  p_j, ~\forall~1\le j\le k,
	$$
	which implies that $\phi(x+y)=\phi(x)+\phi(y)$. Thus $\phi$ is a group homomorphism. On the other hand, if $\phi(x)=(0,0,\dots,0)$ which means that 
	$$
	x\equiv 0 \mod  p_j, ~\forall~1\le j\le k,
	$$
	then $x=0$. This implies that $\phi$ is injective. Due to the Chinese remainder theorem, it is also surjective. Therefore, we conclude that $\phi$ is an isomorphism.  
\end{proof}

For an element $x\in \ZN$, we sometimes write $(x_1,x_2,\dots, x_k)$, which is actually $\phi(x)$, to represent $x$. In what follows, we identify $\ZN$ with the set $\{0,1,\dots, N-1 \}$ whenever we do not concentrate on the addition on $\ZN$. Now we study the equivalent definition of prime-cycles in $\Z_N$.

\begin{lem}\label{lem:prime cycle 1}
	Let $L$ be a multi-set in $\ZN$. Then $L$ is a $p_1$-cycle if and only if it has the form
	\begin{equation}\label{eq:prime cycles form}
	\{(\ell, x_2, \dots, x_k ): 0\le \ell\le p_1-1 \},
	\end{equation}
	for some $x_j\in \Z_{p_j}$ for $2\le j\le k$. Moreover, if $L$ is a $p_1$-cycle, then for any $y\in \ZN$ with $p_1\nmid~y$, $yL$ is also a $p_1$-cycle.
\end{lem}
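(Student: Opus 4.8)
The plan is to transport the whole statement through the isomorphism $\phi$ of Lemma \ref{lem:goup isomorphism}, where the combinatorics of cosets becomes transparent. First I would locate the generator of the relevant subgroup: a $p_1$-cycle is, by the explicit description, a coset of $H:=\langle N/p_1\rangle$ (a subgroup of order $p_1$), so I compute $\phi(N/p_1)$. Since $N/p_1=p_2p_3\cdots p_k$ is divisible by each $p_j$ with $j\ge 2$ but coprime to $p_1$, reading residues gives $\phi(N/p_1)=(c,0,\dots,0)$ with $c\equiv p_2\cdots p_k\not\equiv 0\pmod{p_1}$. As $p_1$ is prime, $c$ is invertible modulo $p_1$, so the group generated by $(c,0,\dots,0)$ is exactly $\mathbb{Z}_{p_1}\times\{0\}\times\cdots\times\{0\}$; that is, $\phi(H)=\mathbb{Z}_{p_1}\times\{0\}^{k-1}$.

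Next I would read off the shape of a coset. Writing $\phi(j)=(j_1,x_2,\dots,x_k)$, the image of $j+H$ is $\phi(j)+\phi(H)=\{(j_1+m,x_2,\dots,x_k):m\in\mathbb{Z}_{p_1}\}$, and as $m$ runs over $\mathbb{Z}_{p_1}$ the first coordinate $j_1+m$ runs over all of $\mathbb{Z}_{p_1}$. Hence $\phi(j+H)=\{(\ell,x_2,\dots,x_k):0\le\ell\le p_1-1\}$, precisely the form \eqref{eq:prime cycles form}. Conversely, any set of this form is visibly $\phi$ of a coset of $H$. Since $\phi$ is a bijection, this yields the equivalence: $L$ is a $p_1$-cycle if and only if it has the form \eqref{eq:prime cycles form}.

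For the ``moreover'' assertion I would show that multiplication by $y$ with $p_1\nmid y$ maps $H$ to itself, hence carries the $p_1$-cycle $L=j+H$ to the coset $yj+H$, again a $p_1$-cycle. The key point is that $y\cdot(N/p_1)\equiv r\cdot(N/p_1)\pmod N$ where $r\equiv y\pmod{p_1}$, because $(y-r)(N/p_1)$ is an integer multiple of $N$; since $p_1\nmid y$ we have $r\neq 0$, so $y\cdot(N/p_1)$ is a nonzero element of the $p_1$-element group $H$ and therefore generates it. Thus $yH=\langle y\cdot(N/p_1)\rangle=H$, and $yL=yj+H$ is a $p_1$-cycle. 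Alternatively, in product coordinates one sees this at once: $\phi$ is in fact a ring isomorphism, so $\phi(yL)=\{(y_1\ell,y_2x_2,\dots,y_kx_k):0\le \ell\le p_1-1\}$ with $y_1\neq 0$, whence $y_1\ell$ again exhausts $\mathbb{Z}_{p_1}$.

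There is no real analytic difficulty here; once $N/p_1$ is identified in coordinates the lemma is pure bookkeeping. The one point deserving care is that Lemma \ref{lem:goup isomorphism} is stated only as a \emph{group} isomorphism, whereas the slickest proof of the ``moreover'' part uses multiplicativity of $\phi$. To stay strictly within what is proved, I would favor the additive argument above, invoking only that $H\cong\mathbb{Z}_{p_1}$ has prime order, so that every nonzero element generates it; the ring-theoretic version can be mentioned as an equivalent, more conceptual route.
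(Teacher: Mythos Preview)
Your proof is correct and takes essentially the same approach as the paper: both compute the residues of $N/p_1$ modulo each $p_j$ to identify $\phi(H)=\mathbb{Z}_{p_1}\times\{0\}^{k-1}$ and then read off the cosets. For the ``moreover'' part the paper simply says ``it is not hard to check'' that $yL$ again has the form \eqref{eq:prime cycles form} (implicitly using coordinatewise multiplication), so your additive argument that $yH=H$ is a bit more careful but not materially different.
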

\begin{proof}
	By definition, $L$ is a $p_1$-cycle if and only if $L$ has the form $\{x, x+N/p_1, x+2N/p_1, \dots, x+(p_1-1)N/p_1 \}$ for some $x\in \ZN$. 
	We observe that 
	$$
	\ell N/p_1\equiv 0 \mod p_j,~\forall~2\le j\le k, 0\le \ell\le p_1,
	$$
	and that
	$
	\{\ell N/p_1: 0\le \ell \le p_1 \}
	$ forms a complete set of residues modulo $p_1$.
	Thus for $x=(x_1,x_2, \dots, x_k)\in \ZN$, we deduce that the set $\{x, x+N/p_1, x+2N/p_1, \dots, x+(p_1-1)N/p_1 \}$ is exactly of the form $(\ref{eq:prime cycles form})$.
	
	On the other hand, it is not hard to check that if $L$ has the form $(\ref{eq:prime cycles form})$, then $yL$ also has the form $(\ref{eq:prime cycles form})$ for any  $y\in \ZN$ with $p_1 \nmid ~y$. This completes the proof.
\end{proof}

Now we prove a criterion for a multi-set in $\Z_N$ having prime-cycles. 
\begin{lem}\label{lem:prime cycle 2}
	Let $A$ be a multi-set in $\ZN$. Let $2\le m\le k$. Then $p_mp_{m+1}\cdots p_k A$ has a $p_1$-cycle if and only if the multi-set $A$ contains a subset
	\begin{equation*}
	\{(\ell, x_2, \dots, x_{m-1}, x_m^{(\ell)} \dots,  x_k^{(\ell)}   ):  0\le \ell\le p_1-1  \}.
	\end{equation*}
\end{lem}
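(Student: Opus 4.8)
The plan is to translate both sides of the equivalence into the product representation $\Z_{p_1}\times\Z_{p_2}\times\cdots\times\Z_{p_k}$ supplied by Lemma \ref{lem:goup isomorphism}, after first pinning down exactly how multiplication by $n:=p_mp_{m+1}\cdots p_k$ acts coordinatewise. If $a=(a_1,\dots,a_k)$ in this representation, then the $j$-th coordinate of $na$ is $na_j \bmod p_j$. Since $n$ is a product of the primes $p_m,\dots,p_k$, it is coprime to each of $p_1,\dots,p_{m-1}$, so multiplication by $n$ is a unit (hence a bijection) on each of the first $m-1$ coordinates; whereas for $j\ge m$ we have $p_j\mid n$, forcing $na_j\equiv 0 \bmod p_j$. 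Consequently every element of $n\cdot A$ has the shape $(na_1,\dots,na_{m-1},0,\dots,0)$, with the last $k-m+1$ coordinates killed. This structural observation is the engine of the whole argument.

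For the ``if'' direction I would take the postulated subset of $A$ and apply multiplication by $n$ directly: each element $(\ell,x_2,\dots,x_{m-1},x_m^{(\ell)},\dots,x_k^{(\ell)})$ maps to $(n\ell,nx_2,\dots,nx_{m-1},0,\dots,0)$. Because $n$ is a unit modulo $p_1$, as $\ell$ runs over $\{0,1,\dots,p_1-1\}$ the first coordinate $n\ell$ runs over a complete residue system modulo $p_1$, while coordinates $2,\dots,m-1$ remain frozen at $nx_2,\dots,nx_{m-1}$ and coordinates $m,\dots,k$ stay zero. By the description in Lemma \ref{lem:prime cycle 1} this image is precisely a $p_1$-cycle, and it sits inside $n\cdot A$, as required.

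For the ``only if'' direction I would start from a $p_1$-cycle $L=\{(\ell,y_2,\dots,y_k):0\le\ell\le p_1-1\}$ contained in $n\cdot A$. Since every element of $n\cdot A$ has zeros in coordinates $m,\dots,k$, the cycle is forced to satisfy $y_m=\cdots=y_k=0$. For each $\ell$ I can choose a preimage $a^{(\ell)}\in A$ with $na^{(\ell)}=(\ell,y_2,\dots,y_{m-1},0,\dots,0)$; inverting the unit action on the first $m-1$ coordinates then pins the first coordinate of $a^{(\ell)}$ to $n^{-1}\ell$ and coordinates $2,\dots,m-1$ to the fixed values $n^{-1}y_j$, leaving coordinates $m,\dots,k$ unconstrained. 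Reindexing through the bijection $\ell\mapsto n^{-1}\ell \bmod p_1$ displays $\{a^{(\ell)}\}$ as a subset of $A$ of exactly the asserted form, with $x_j=n^{-1}y_j$.

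The main obstacle here is bookkeeping rather than conceptual depth. Two points demand care: first, because $A$ is a multi-set while $n\cdot A$ counts elements with multiplicity and a $p_1$-cycle is an honest set, I must justify selecting one preimage per cycle element and argue these preimages are genuinely distinct in $A$; second, I must track the relabeling induced by the unit $n \bmod p_1$, which permutes rather than fixes the first coordinate, so that the reindexed subset lands in the precise normal form demanded by the statement.
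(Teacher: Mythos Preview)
Your proposal is correct and follows essentially the same approach as the paper: both arguments reduce to the observation that multiplication by $p_m\cdots p_k$ kills coordinates $m,\dots,k$ and acts bijectively on the first $m-1$ coordinates, then invoke Lemma~\ref{lem:prime cycle 1}. The paper's proof is extremely terse (one displayed formula plus a reference to Lemma~\ref{lem:prime cycle 1}), and what you have written is precisely the detailed unpacking of that sketch, including the multiplicity bookkeeping and the unit reindexing that the paper leaves implicit.
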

\begin{proof}
	Observe that $p_mp_{m+1}\cdots p_k \ZN$ is the set
	$$
	\left\{(x_1, x_2, \dots, x_{m-1}, 0, 0, \dots,  0 ): x_j\in \Z_{p_j}~\text{for}~1\le j\le m-1  \right\}.
	$$
	By Lemma \ref{lem:prime cycle 1}, we complete the proof.
\end{proof}
The above lemma is useful to analyse the structure of multi-sets when it has some zeros. Moreover, it is the crucial technique in the proof of Theorem \ref{main thm}.


\section{Tiles in $\Zppp$ $\Rightarrow$ $\Ta+\Tb$}\label{Section:Tiles=T1+T2}
In this section, we show that tiles in  $\Zppp$ satisfy the conditions $\Ta$ and $\Tb$. After we obtained this result, we are informed by Romanos-Diogenes Malikiosis that the result is not new and is a consequence of the argument \cite{t2}: a tile in a cyclic group whose order is square-free always tiles by a subgroup. We give the proof here for completeness. Actually, we prove the following proposition.

\begin{proposition}\label{main thm 2}
	Let $A$ be a subset in $\Zppp$ with $p_1, p_2, \dots, p_k$ distinct primes. Then the following are equivalent.
	\begin{itemize}
		\item [(i)] The set $A$ is a tile of $\Zppp$ by translation.
		\item [(ii)] The set $A$ satisfies the conditions $\Ta$ and $\Tb$.
		\item [(iii)] The set $A$ has the form 
		$$
		\{(\vec{n},\vec{y}_{\vec{n}}): \vec{n}\in \Z_{p_1'p_2'\cdots p_{\ell}'}  \}.
		$$
		where $1\le \ell\le k$, $(p_1',p_2', \dots, p_k' )$ is a permutation of $(p_1,p_2, \dots, p_k )$ and $\vec{y}_{\vec{n}}\in \Z_{p_{\ell+1}'p_{\ell+2}'\cdots p_{k}'}$ for all $\vec{n}\in \Z_{p_1'p_2'\cdots p_{\ell}'}$. 
	\end{itemize}
\end{proposition}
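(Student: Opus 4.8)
The plan is to prove the cycle $(i)\Rightarrow(iii)\Rightarrow(ii)\Rightarrow(i)$, identifying $\Zppp$ with $\mathbb{Z}_{p_1}\times\cdots\times\mathbb{Z}_{p_k}$ via Lemma~\ref{lem:goup isomorphism} throughout. The implication $(ii)\Rightarrow(i)$ is already Theorem~\ref{thm:Coven and Meyerowitz}, so nothing new is needed there; as an elementary alternative, $(iii)\Rightarrow(i)$ is immediate, since if $A$ has the graph form over the first $\ell$ coordinates then $H=\{0\}^{\ell}\times\mathbb{Z}_{p'_{\ell+1}}\times\cdots\times\mathbb{Z}_{p'_k}$ is a tiling complement: every $(\vec m,\vec z)$ decomposes uniquely as $(\vec m,\vec y_{\vec m})+(0,\vec z-\vec y_{\vec m})$.

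For $(iii)\Rightarrow(ii)$ I would run a character computation on $G:=\mathbb{Z}_{p'_1}\times\cdots\times\mathbb{Z}_{p'_\ell}$. Writing $c(\vec u)=\sharp\{a\in A:(a_1,\dots,a_\ell)=\vec u\}$, the graph form says exactly that $c\equiv1$, so every nontrivial Fourier coefficient of $c$ on $G$ vanishes. For a divisor $d\mid p'_1\cdots p'_\ell$ with $d>1$ and a primitive $d$-th root, the value $A(\omega_N^{N/d})$ depends only on the first $\ell$ coordinates and is a Fourier coefficient of $c$ at a character of order $d$; by Lemma~\ref{Lem: zeros N/p^d of mask polynomial}(2), together with the Galois invariance in Lemma~\ref{Lem: zeros N/p^d of mask polynomial}(1), the divisibility $\Phi_d\mid A$ is equivalent to the vanishing of all of these. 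Hence $\Phi_d\mid A$ for every such $d$. Taking $d$ prime shows $\{p'_1,\dots,p'_\ell\}\subseteq S_A$, and since $\prod_{s\in S_A}\Phi_s(1)\mid\sharp A=p'_1\cdots p'_\ell$ by Lemma~\ref{Lem: zeros N/p^d of mask polynomial}(3) this is forced to be an equality $S_A=\{p'_1,\dots,p'_\ell\}$, which is $\Ta$; taking $d$ with several prime factors then yields $\Tb$.

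The substance is $(i)\Rightarrow(iii)$. First the cardinality is pinned down: evaluating $A(X)T(X)\equiv1+X+\cdots+X^{N-1}$ at primitive prime-order roots shows $\Phi_p\mid A$ or $\Phi_p\mid T$ for every prime $p\mid N$, so $S_A\cup S_T=\{p_1,\dots,p_k\}$; Lemma~\ref{Lem: zeros N/p^d of mask polynomial}(3), applied to each prime, gives $\prod_{p\in S_A}p\mid\sharp A$ and $\prod_{p\in S_T}p\mid\sharp T$, and since $\sharp A\cdot\sharp T=N$ this forces $S_A\sqcup S_T=\{p_1,\dots,p_k\}$ and $\sharp A=\prod_{p\in S_A}p$. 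Relabelling so that $S_A=\{p_1,\dots,p_\ell\}$, it remains to show that $A$ is a transversal of the coordinate subgroup $H=\{0\}^{\ell}\times\mathbb{Z}_{p_{\ell+1}}\times\cdots\times\mathbb{Z}_{p_k}$, i.e.\ that the projection of $A$ to the first $\ell$ coordinates is a bijection onto $\mathbb{Z}_{p_1}\times\cdots\times\mathbb{Z}_{p_\ell}$; as this has the right cardinality, injectivity suffices. I would prove injectivity by induction on $k$: choosing a prime $p_k\in S_T$, so that $T$ is equidistributed in the $k$-th coordinate, I would use the prime-cycle criterion of Lemma~\ref{lem:prime cycle 2} to describe the fibres of $A$ over the first $k-1$ coordinates, push the tiling down to $\mathbb{Z}_{N/p_k}$ where the inductive hypothesis applies, and lift the resulting graph form back.

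The main obstacle is exactly this reduction. Equidistribution of $T$ in the $k$-th coordinate (that is, $p_k\in S_T$) does \emph{not} make $T$ periodic in that coordinate, so passing to the quotient by $\langle N/p_k\rangle$ is not a formal matter; one must use the decomposition of the coordinate-slices of the tiling into prime-cycles, via Lemma~\ref{lem:prime cycle 2} together with the structure of vanishing sums of roots of unity (Proposition~\ref{VanishingTwoPolynomial} and Theorem~\ref{Lem:Lam and Leung}), to guarantee both that the projection of $A$ stays injective and that its image genuinely tiles the smaller group. Controlling these fibres is where essentially all of the work lies; once it is carried out, the chain $(i)\Rightarrow(iii)\Rightarrow(ii)$ closes the equivalence.
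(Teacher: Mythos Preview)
Your treatment of $(iii)\Rightarrow(ii)$ via the Fourier coefficients of the fibre-count $c$ is correct and is a pleasant alternative to the paper's cycle computation; the paper instead checks directly that the multi-sets $(N/p'_j)\cdot A$ and $p'_1\cdots p'_j\,p'_{\ell+1}\cdots p'_k\cdot A$ are unions of prime-cycles, but the content is the same. Your determination of $\sharp A=\prod_{p\in S_A}p$ from the tiling equation is also fine.

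The genuine gap is in $(i)\Rightarrow(iii)$. You reduce to showing that the projection of $A$ onto the $S_A$-coordinates is injective, and then propose an induction on $k$ by ``pushing the tiling down to $\mathbb{Z}_{N/p_k}$''. As you yourself note, equidistribution of $T$ in the $p_k$-coordinate does not make $T$ periodic there, so neither the projected $A$ nor the projected $T$ is obviously a tile of the quotient, and invoking Proposition~\ref{VanishingTwoPolynomial} or Theorem~\ref{Lem:Lam and Leung} does not by itself repair this: Proposition~\ref{VanishingTwoPolynomial} only applies when $N/n$ has at most two prime factors, and Theorem~\ref{Lem:Lam and Leung} gives a numerical constraint, not a structural one on the fibres. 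So the inductive step, as written, is not a proof but a hope.

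The paper bypasses this entirely with Tijdeman's lemma (Lemma~\ref{Lem:nA is also a tile}). Set $n=p'_{\ell+1}\cdots p'_k$, the product of the primes in $S_T$. Then $\gcd(n,\sharp A)=1$, so $nA$ is still a \emph{set} and still tiles with $T$. In CRT coordinates, multiplication by $n$ kills the last $k-\ell$ coordinates and acts by units on the first $\ell$; hence $nA$ is contained in the subgroup $\mathbb{Z}_{p'_1}\times\cdots\times\mathbb{Z}_{p'_\ell}\times\{0\}^{k-\ell}$, which has exactly $\sharp A$ elements, so $nA$ equals that subgroup. This says precisely that the projection of $A$ onto the first $\ell$ coordinates is a bijection, which is the graph form $(iii)$. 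No induction, no fibre analysis, no appeal to the vanishing-sum structure is needed.
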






We will need the following lemma which is due to Tijdeman \cite{tij} and is also included in \cite{cm}. We review the proof for completeness.
\begin{lem}\label{Lem:nA is also a tile}
	Let $N$ be a positive integer. Let $(A,B)$ be a tiling pair in $\ZN$. Then for any positive integer $n$ with $(n, \sharp A)=1$, $nA$ is a subset in $\ZN$ and $(nA, B)$ is also a tiling pair in $\ZN$.
\end{lem}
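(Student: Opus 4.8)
The plan is to prove Lemma~\ref{Lem:nA is also a tile} by translating the tiling condition into the language of mask polynomials via Lemma~\ref{lem:tiling equivalent}, and then exploiting the Galois action described in Section~2.3. The key observation is that multiplying the multi-set $A$ by an integer $n$ coprime to $\sharp A$ corresponds, at the level of the mask polynomial evaluated at roots of unity, to a Galois automorphism whenever $n$ is coprime to $N$; the coprimality hypothesis $(n,\sharp A)=1$ is what allows us to reduce to that case.

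First I would argue that it suffices to treat the case $(n,N)=1$. Indeed, suppose we have established the result for all $n$ coprime to $N$. For general $n$ with $(n,\sharp A)=1$, observe that since $(\sharp A)(\sharp B)=N$ by Lemma~\ref{lem:tiling equivalent}, every prime factor of $N$ divides either $\sharp A$ or $\sharp B$. By the Chinese Remainder Theorem I can choose an integer $n'$ with $n'\equiv n \pmod{\sharp A}$ and $n'\equiv 1 \pmod{\sharp B}$, so that $n'$ is coprime to $N$; because the map $x\mapsto nx$ on $A$ agrees with $x\mapsto n'x$ modulo each prime power dividing $\sharp A$, the multi-sets $nA$ and $n'A$ coincide. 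Thus replacing $n$ by $n'$ reduces to the coprime-to-$N$ case. The main point here is to check carefully that $nA$ and $n'A$ really are the same multi-set, which follows since $A$ lives in $\ZN$ and the two multipliers agree on the relevant component.

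Next, assuming $(n,N)=1$, I would show first that $nA$ is an honest set (no repeated elements): multiplication by a unit $n\in\ZN^\star$ is a bijection of $\ZN$, so it carries the set $A$ to a set of the same cardinality. Then I would verify that $(nA,B)$ is a tiling pair using characterization (3) of Lemma~\ref{lem:tiling equivalent}, namely that $(nA)(X)\,B(X)\equiv 1+X+\cdots+X^{N-1}\pmod{X^N-1}$. The cleanest route is to evaluate at all $N$-th roots of unity and use Lemma~\ref{Lem: zeros N/p^d of mask polynomial}(1): the tiling condition $A(X)B(X)\equiv\sum_{j=0}^{N-1}X^j$ is equivalent to saying that $A(\omega_N^t)B(\omega_N^t)=0$ for every $t\not\equiv 0$ and equals $N$ at $t=0$. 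Since $(nA)(\omega_N^t)=A(\omega_N^{nt})$, and since $t\mapsto nt$ permutes the nonzero residues (as $n\in\ZN^\star$), the zero set of $(nA)(X)B(X)$ at nonzero roots of unity is controlled by that of $A(X)B(X)$, while at $t=0$ we simply have $(nA)(1)=\sharp(nA)=\sharp A$ and $B(1)=\sharp B$, giving the product $N$.

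The one subtlety I would flag as the main obstacle is that the tiling identity involves the product $A(X)B(X)$, not $A$ alone, so applying the Galois substitution $X\mapsto X^n$ transforms it into $A(X^n)B(X^n)\equiv\sum_{j=0}^{N-1}X^{nj}$, which rearranges to $\sum_{j} X^j$ again precisely because $n$ is a unit and $j\mapsto nj$ permutes $\ZN$. One must confirm that the \emph{simultaneous} substitution in both factors is what the Galois automorphism delivers, and that it is $(nA)(X)=A(X^n)$ together with the permutation of exponents in $B(X^n)$ that conspire to restore the full-cyclic polynomial on the right-hand side; equivalently, one checks $\Phi_{N/d}(X)\mid (nA)(X)$ for exactly the same divisors $d$ for which $\Phi_{N/d}(X)\mid A(X)$, via part~(2) of Lemma~\ref{Lem: zeros N/p^d of mask polynomial}. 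Once this bookkeeping is in place, characterization (3) of Lemma~\ref{lem:tiling equivalent} is satisfied and the proof concludes.
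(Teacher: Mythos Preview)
Your treatment of the case $(n,N)=1$ is correct: once $n$ is a unit in $\ZN$, the Galois action preserves the zero set of $A(X)$ on $N$-th roots of unity, and evaluating $A(X^n)B(X)$ at every $\omega_N^t$ recovers the tiling identity via Lemma~\ref{lem:tiling equivalent}(3).

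The gap is in your reduction to that case. You claim that if $n'\equiv n\pmod{\sharp A}$ and $n'\equiv 1\pmod{\sharp B}$ then the multi-sets $nA$ and $n'A$ coincide in $\ZN$. This is false: $nA$ depends on $n$ modulo $N$, not modulo $\sharp A$, because the elements of $A$ are arbitrary residues in $\ZN$ and need not lie in any distinguished CRT component. A concrete counterexample is $N=6$, $A=\{0,1\}$, $B=\{0,2,4\}$, $n=3$: here $\sharp A=2$, $\sharp B=3$, your recipe forces $n'\equiv 1\pmod 6$, yet $3A=\{0,3\}\neq\{0,1\}=1\cdot A$. (A secondary issue: when $\gcd(\sharp A,\sharp B)>1$, which happens as soon as $N$ is not squarefree, the congruence system you write may have no solution at all.)

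This is not a detail one can patch within the Galois framework: the case where $n$ shares a prime factor with $N$ but not with $\sharp A$ is precisely the substance of the lemma. The paper proceeds differently, reducing to a single prime $n=p$ with $p\nmid\sharp A$ and then invoking Lemma~3.1 of Coven--Meyerowitz \cite{cm} (Tijdeman's argument). That lemma uses the Frobenius congruence $A(X)^p\equiv A(X^p)\pmod p$: one shows that the coefficients of $A(X^p)B(X)$ modulo $X^N-1$ are nonnegative integers summing to $N$ and each congruent to $1$ modulo $p$, hence all equal to $1$. Iterating over the prime factors of $n$ then yields the full statement.
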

\begin{proof}
	Since $(A,B)$ is a tiling pair in $\ZN$, by Lemma \ref{lem:tiling equivalent}, we have
	\begin{equation}
	A(X)B(X)=1+X+X^2+\cdots+X^{N-1}\mod X^N-1.
	\end{equation}
	We write $n=p_1^{\alpha_1}p_2^{\alpha_2}\cdots p_k^{\alpha_k}$ where $\alpha_1, \alpha_2, \dots, \alpha_k\ge 1$ and $p_1, p_2, \dots, p_k$ are distinct primes. By Lemma 3.1 \cite{cm}, we have
	$$
	A(X^{p_1})B(X)=1+X+X^2+\cdots+X^{N-1}\mod X^N-1.
	$$
	Observing that the mask polynomial of $p_1A$ is $A(X^{p_1}) ~(\text{mod}~ X^N-1)$, by Lemma \ref{lem:tiling equivalent}, we obtain that $(p_1A, B)$ is a tiling pair. It follows that $\sharp (p_1A)=\sharp A$. Since $p_j$ do not divide $\sharp A$ for all $1\le j\le k$, repeating the same reason, we get that $(nA, B)$ is a tiling pair and consequently $nA$ is a subset in $\ZN$.
\end{proof}




\begin{proof}[Proof of Proposition \ref{main thm 2}] Denote by $N=p_1p_2\cdots p_k$. Since it is proved that $(ii)\Rightarrow (i)$ by Theorem \ref{thm:Coven and Meyerowitz}, it remains to prove here that $(i)\Rightarrow (iii)$ and $(iii)\Rightarrow (ii)$.
	\vspace{7pt}
	
	$(i)\Rightarrow (iii):$
	Suppose that $A$ is a tile in $\ZN$. Then there is $1\le \ell\le k$ and a permutation $(p_1',p_2', \dots, p_k' )$ of $(p_1,p_2, \dots, p_k )$ such that
	$
	\sharp A=p_1'p_2'\cdots p_{\ell}'.
	$
	By Lemma \ref{Lem:nA is also a tile}, $p_{\ell+1}'p_{\ell+2}'\cdots p_{k}'A$ is a subset in $\ZN$.  Observe that the set $p_{\ell+1}'p_{\ell+2}'\cdots p_{k}'A$ has the form
	$$
	\left\{(x,\vec{0}_{\Z_{p_{\ell+1}'p_{\ell+2}'\cdots p_{k}'}}): x\in S\right\},
	~\text{for some} ~S \subset \Z_{p_1'p_2'\cdots p_{\ell}'},
	$$
	where $\vec{0}_{\Z_{p_{\ell+1}'p_{\ell+2}'\cdots p_{k}'}}$ stands for $(0,0,\dots, 0)\in \Z_{p_{\ell+1}'}\times \Z_{p_{\ell+2}'}\times\cdots \Z_{p_k'}$. Since $\sharp A=p_1'p_2'\cdots p_{\ell}'$, it follows that $S$ has to be $\Z_{p_1'p_2'\cdots p_{\ell}'}$. Thus $A$ has the form
	$$
	A=\{(\vec{n},\vec{y}_{\vec{n}}): \vec{n}\in \Z_{p_1'p_2'\cdots p_{\ell}'}  \},
	$$
	where $\vec{y}_{\vec{n}}\in \Z_{p_{\ell+1}'p_{\ell+2}'\cdots p_{k}'}$ for all $\vec{n}\in \Z_{p_1'p_2'\cdots p_{\ell}'}$.
	
	\vspace{10pt}

	$(iii)\Rightarrow (ii):$ Let $N'=p_1'p_2'\dots p_{\ell}'$. It is not hard to see that $\sharp A=N'$ and for any $1\le j\le \ell$,
	$$
	(N/p_j')\cdot A=(N'/p_j') \left\{ \left(\vec{0}_{\Z_{p_1'p_2'\cdots p_{j-1}'}}, i, \vec{0}_{\Z_{p_{j+1}'p_{\ell+2}'\cdots p_{k}'}} \right): 0\le i\le p_j'-1  \right\}.
	$$
	It follows that $(N/p_j')\cdot A$ is a union of $p_j'$-cycles. By Proposition \ref{VanishingTwoPolynomial}, $N/p_j'\in \ZZ_A$ for all $1\le j\le \ell$. It follows that $A$ satisfies $\Ta$. 
	It remains to prove that $A$ satisfies the condition $\Tb$. In fact, it is sufficient to prove that $p_1'p_2'\cdots p_j' p_{\ell+1}'p_{\ell+2}' \cdots p_{k}'\in \ZZ_A $ for any $0\le j\le \ell-1$. Actually, we observe that $p_1'p_2'\cdots p_j' p_{\ell+1}'p_{\ell+2}' \cdots p_{k}' A$ has the form
	$$
	p_1'p_2'\cdots p_j'\{ (\vec{0}_{\Z_{p_1'p_2'\cdots p_j'}},\vec{n}, \vec{0}_{\Z_{p_{\ell+1}'p_{\ell+2}'\cdots p_{k}'}} ):\vec{n}\in \Z_{p_{j+1}'p_{j+2}'\cdots p_{\ell}'}  \}.
	$$
	It is easy to see that $p_1'p_2'\cdots p_j' p_{\ell+1}'p_{\ell+2}' \cdots p_{k}' A$ is a union of $p_{\ell}'$-cycles. It follows that 
	$$p_1'p_2'\cdots p_j' p_{\ell+1}'p_{\ell+2}' \cdots p_{k}'\in \ZZ_A.$$
	 We conclude that $A$ satisfies the condition $\Tb$.

\end{proof}

\section{Spectral sets in $\Zpqr$ $\Rightarrow$ $\Ta+\Tb$}\label{Section:Spectral=T1+T2}
In this section, we prove that spectral sets in $\Zpqr$ satisfies the conditions $\Ta+\Tb$. This completes the proof of Theorem \ref{main thm}.

We first prove several technical lemmas. These lemmas which may also be of independent interest will be useful in the proof of Theorem \ref{main thm}. 
\begin{lem}\label{lem:1}
	Let $(A,B)$ be a spectral pair in $\Zpqr$. 
	If $pq\notin \ZZ_B$, then there exist a subset $S\subset \Z_p\times \Z_q $ and a function $f: \Z_p\times \Z_q \to \Z_r$ such that 
	\begin{equation}\label{eq:f(pq)}
	A=\{(x,y,f(x,y)): (x,y)\in S \}.
	\end{equation}
	Moreover, we have $\sharp A\le pq$, and equality holds if and only if $S=\Z_p\times \Z_q$.
\end{lem}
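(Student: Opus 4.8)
The plan is to reinterpret the desired conclusion as a statement about fibers: asserting that $A=\{(x,y,f(x,y)):(x,y)\in S\}$ is exactly the assertion that no two distinct elements of $A$ share the same first two coordinates. Once this is established, $S$ is simply the projection of $A$ onto $\mathbb{Z}_p\times\mathbb{Z}_q$ and $f$ is the well-defined map sending each $(x,y)\in S$ to its unique third coordinate in $A$. In the cycle language of Lemma \ref{lem:prime cycle 1}, this says that each $r$-cycle (the coset $\{(x,y,z):z\in\mathbb{Z}_r\}$, which corresponds to a coset of $\langle pq\rangle$ under the identification of Lemma \ref{lem:goup isomorphism}) meets $A$ in at most one point. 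So the whole proposition reduces to the single claim: \emph{no nonzero element of $A-A$ lies in the subgroup $\langle pq\rangle$ of order $r$.}

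To prove this claim I would argue by contradiction, exploiting the symmetry of spectral pairs together with the Galois invariance of the zero set. First, by Proposition \ref{Equivalence of spectral set}, $(A,B)$ being a spectral pair forces $(B,A)$ to be one as well, hence $(A-A)\setminus\{0\}\subset\ZZ_B$. Second, by Lemma \ref{Lem: zeros N/p^d of mask polynomial}(1) the set $\ZZ_B$ is invariant under multiplication by any element of $\ZN^\star$ (with $N=pqr$). Now suppose two elements of $A$ agreed in their first two coordinates; their difference $\delta\neq 0$ satisfies $\delta\equiv 0\pmod{p}$ and $\delta\equiv 0\pmod{q}$, hence $\delta\in\langle pq\rangle$, a cyclic group of prime order $r$. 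Being nonzero, $\delta$ generates this group, so $pq=m\delta$ for some $m\in\{1,\dots,r-1\}$; choosing via the Chinese remainder theorem a unit $g\in\ZN^\star$ with $g\equiv m\pmod r$, $g\equiv 1\pmod p$ and $g\equiv 1\pmod q$ gives $g\delta=m\delta=pq$ in $\ZN$. Then $pq=g\delta\in\ZZ_B$ by invariance, contradicting the hypothesis $pq\notin\ZZ_B$. This establishes the fiber claim and hence the graph representation \eqref{eq:f(pq)}.

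The cardinality statement then follows immediately: since $A$ injects into $S$ under the projection, $\sharp A=\sharp S\le\sharp(\mathbb{Z}_p\times\mathbb{Z}_q)=pq$, with equality precisely when $S=\mathbb{Z}_p\times\mathbb{Z}_q$. The only genuinely delicate point is the step realizing $pq$ as a unit multiple of $\delta$ inside the order-$r$ subgroup; this is where the primality of $r$ and the $\ZN^\star$-invariance of $\ZZ_B$ are both essential, and it is here that the hypothesis $pq\notin\ZZ_B$ gets used in its exact form. Everything else is bookkeeping, so I expect this unit-multiple argument to be the heart of the proof.
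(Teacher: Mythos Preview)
Your proof is correct and follows essentially the same approach as the paper: both argue by contradiction that two elements of $A$ sharing their first two coordinates would force $pq\in\ZZ_B$ via the inclusion $(A-A)\setminus\{0\}\subset\ZZ_B$ and the $\ZN^\star$-invariance of $\ZZ_B$. The paper is simply more terse, writing directly that $(0,0,z-z')\in pq\,\Zpqr^\star$ where you spell out the Chinese remainder argument producing the explicit unit $g$.
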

\begin{proof}
	If the set $A$ has two elements $(x,y,z)$ and $(x,y,z')$ with $z\not=z'$, then we have
	$$
	(x,y,z)-(x,y,z')=(0,0, z-z') \in pq \Z_{pqr}^\star.
	$$
	By Proposition \ref{Equivalence of spectral set}, we have $pq\in \ZZ_B$, which is a contradiction. It follows that for any $(x,y,z)\in A$, the value of $z$ is decided by the values $x$ and $y$ together. Thus $A$ has the form $(\ref{eq:f(pq)})$. Moreover, it is easy to see that $\sharp A=\sharp S$. Since $S$ is a subset of $\Z_p\times \Z_q$, we conclude that $\sharp A=pq$ if and only if $S=\Z_p\times \Z_q$. 
\end{proof}

The general case of the following lemma has been already proved by the implication $(iii)\Rightarrow (ii)$ in Proposition \ref{main thm 2}. Here, we use mask polynomials to give a different proof of this special case. We remark that the general case can also be proved by this method.

\begin{lem}\label{lem:3}
	Suppose that a set $A$ has the form (\ref{eq:f(pq)}) with $S=\Z_p\times \Z_q$. Then $A$ satisfies the conditions $\Ta$ and $\Tb$. 
\end{lem}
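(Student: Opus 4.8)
The plan is to determine the set $S_A$ exactly and then read off both \Ta\ and \Tb\ by evaluating the mask polynomial $A(X)$ at a few roots of unity, using the dictionary of Lemma~\ref{Lem: zeros N/p^d of mask polynomial}(2): for a divisor $d$ of $N=pqr$ one has $\Phi_{N/d}(X)\mid A(X)$ if and only if $A(\omega_{pqr}^{d})=0$. Writing each element of $A$ through the isomorphism $\phi$ of Lemma~\ref{lem:goup isomorphism} as $(x,y,f(x,y))$ with $(x,y)$ ranging over all of $\Z_p\times\Z_q$, the mask polynomial is $A(X)=\sum_{(x,y)\in\Z_p\times\Z_q}X^{a(x,y)}$, where $a(x,y)$ denotes the representative with $a(x,y)\equiv(x,y,f(x,y))$; in particular $\sharp A=pq$.

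First I would show $p,q\in S_A$. Evaluating at $\omega_{pqr}^{qr}$, which is a primitive $p$-th root of unity, the quantity $\omega_{pqr}^{a(x,y)\cdot qr}$ depends only on $x$, so $A(\omega_{pqr}^{qr})=q\sum_{x\in\Z_p}\omega_p^{x}=0$; symmetrically, evaluating at the primitive $q$-th root $\omega_{pqr}^{pr}$ gives $A(\omega_{pqr}^{pr})=p\sum_{y\in\Z_q}\omega_q^{y}=0$. By the dictionary above this yields $\Phi_p(X)\mid A(X)$ and $\Phi_q(X)\mid A(X)$, i.e. $p,q\in S_A$.

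Next I would show $r\notin S_A$, which is the decisive point for \Ta. Evaluating at the primitive $r$-th root $\omega_{pqr}^{pq}$ gives $A(\omega_{pqr}^{pq})=\sum_{(x,y)}\omega_r^{f(x,y)}$, which is $B(\omega_r)$ for the multi-set $B$ in $\Z_r$ consisting of the values $f(x,y)$, so $\sharp B=pq$. If this vanished, Theorem~\ref{Lem:Lam and Leung} applied with modulus $r$ would force $\sharp B=pq$ to be a nonnegative integer multiple of $r$; since $p,q,r$ are distinct primes, $r\nmid pq$, a contradiction. Hence $A(\omega_{pqr}^{pq})\neq0$ and $r\notin S_A$. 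Combining the steps, $S_A=\{p,q\}$, so $\prod_{s\in S_A}\Phi_s(1)=\Phi_p(1)\Phi_q(1)=pq=\sharp A$, which is exactly \Ta.

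Finally, for \Tb\ the only subset of $S_A$ consisting of powers of distinct primes with more than one element is $\{p,q\}$, so it suffices to prove $\Phi_{pq}(X)\mid A(X)$, equivalently $A(\omega_{pqr}^{r})=0$. Here $\omega_{pqr}^{r}$ is a primitive $pq$-th root of unity, and $\omega_{pqr}^{a(x,y)\cdot r}=\omega_{pq}^{a(x,y)}$ depends only on $a(x,y)\bmod pq$, which under $\Z_{pq}\cong\Z_p\times\Z_q$ is the class of $(x,y)$. As $(x,y)$ runs over $\Z_p\times\Z_q$ exactly once, $A(\omega_{pqr}^{r})=\sum_{k\in\Z_{pq}}\omega_{pq}^{k}=0$, giving \Tb\ and completing the proof. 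The only nonroutine ingredient is the vanishing-sum argument for $r\notin S_A$: all other evaluations collapse by elementary cancellation, but ruling out an accidental vanishing of $\sum_{(x,y)}\omega_r^{f(x,y)}$ for an arbitrary $f$ genuinely requires the Lam--Leung structure theorem, since no hand computation controls that sum in general.
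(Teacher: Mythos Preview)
Your proof is correct and follows essentially the same route as the paper's: both verify $qr,pr\in\ZZ_A$ (i.e.\ $p,q\in S_A$), $pq\notin\ZZ_A$ (i.e.\ $r\notin S_A$), and $r\in\ZZ_A$ (i.e.\ $\Phi_{pq}\mid A$) by evaluating the mask polynomial at the appropriate roots of unity; the paper merely phrases these computations through the multi-sets $prA$, $qrA$, $rA$ and the cycle language of Lemma~\ref{lem:prime cycle 1}.

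One correction to your commentary: the claim that ruling out $A(\omega_{pqr}^{pq})=0$ ``genuinely requires the Lam--Leung structure theorem'' is overstated. The modulus here is the single prime $r$, and since the minimal polynomial of $\omega_r$ over $\mathbb{Q}$ is $1+X+\cdots+X^{r-1}$, a vanishing sum $\sum_j m_j\omega_r^{j}=0$ with nonnegative integers $m_j$ forces all $m_j$ equal and hence $r\mid\sum_j m_j=\sharp B$; this is exactly the elementary Lemma~\ref{Lem: zeros N/p^d of mask polynomial}(3), which is what the paper uses.
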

\begin{proof}
	We observe that $prA$ is a multi-set in $\Zpqr$ and its mask polynomial is
	\begin{equation}
	(prA)(X)=A(X^{pr})=\sum_{j\in \Z_q} pX^{jpr}.
	\end{equation}
	It follows that $(prA)(\omega_{pqr})=0$. Thus $pr\in \ZZ_A$. Similarly, we have $qr\in \ZZ_A$. Since $r\nmid \sharp A=pq$, we have $pq\notin \ZZ_A$. Therefore, we obtain that $A$ satisfies the condition $\Ta$.  On the other hand, it is easy to see that 
	\begin{equation}
	rA=\{ (x,y,0): (x,y)\in  \Z_p \times \Z_q \}.
	\end{equation}
	It follows that $rA$ is a union of $p$-cycles. Thus we obtain that $r\in \ZZ_A$ and conclude that $A$ satisfies the condition $\Tb$. 
	
\end{proof}

The following result is crucial for our proof of Theorem \ref{main thm}.
\begin{lem}\label{lem:lalala}
	Let $A$ be a multi-set in $\Zpqr$. If $(A-A)\cap pq\Zpqr^\star\not=\emptyset$ and there exists a $r$-cycle in $pA$, then $p>r$ and $(A-A)\cap q\Zpqr^\star\not=\emptyset$.
\end{lem}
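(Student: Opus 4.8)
The plan is to pass to the product coordinates supplied by the Chinese remainder isomorphism $\phi$ of Lemma~\ref{lem:goup isomorphism}, writing each element of $\Zpqr$ as a triple $(x,y,z)\in\Z_p\times\Z_q\times\Z_r$. A short computation of the sort already carried out in Lemma~\ref{lem:1} identifies the two cosets appearing in the statement,
\[
pq\,\Zpqr^\star=\{(0,0,z):z\neq0\},\qquad q\,\Zpqr^\star=\{(x,0,z):x\neq0,\ z\neq0\}.
\]
In view of the first identity, the hypothesis governing $pq\,\Zpqr^\star$ is precisely the statement that $A$ has no two distinct points agreeing in both their $\Z_p$- and $\Z_q$-coordinates; equivalently, the $\Z_r$-coordinate of a point of $A$ is a function of its $\Z_p$- and $\Z_q$-coordinates. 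This functional structure is the input I shall use below.

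Next I decode the assumption that $pA$ contains an $r$-cycle. Since multiplication by $p$ sends $(x,y,z)$ to $(0,py,pz)$, the set $pA$ lies in $\{0\}\times\Z_q\times\Z_r$. Ordering the primes so that $r$ plays the role of the cycle direction and $p$ the role of the single multiplied factor, Lemma~\ref{lem:prime cycle 2} translates the existence of an $r$-cycle in $pA$ into the following statement about $A$ itself: $A$ contains $r$ points $(g(t),c,t)$, $t\in\Z_r$, sharing a common $\Z_q$-coordinate $c$, with $\Z_r$-coordinates running through all of $\Z_r$, and with $\Z_p$-coordinates $g(0),\dots,g(r-1)$ that are so far unconstrained.

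The crux is to combine these two inputs. If $g(t)=g(t')$ for some $t\neq t'$, then $(g(t),c,t)$ and $(g(t'),c,t')$ would be two distinct points of $A$ with identical $\Z_p$- and $\Z_q$-coordinates, contradicting the functional structure of the first paragraph; hence $g$ is injective. Its $r$ distinct values lie in $\Z_p$, forcing $p\ge r$, and since $p$ and $r$ are distinct primes this yields $p>r$. For the remaining conclusion, pick any $t\neq t'$ and consider
\[
(g(t),c,t)-(g(t'),c,t')=(g(t)-g(t'),\,0,\,t-t').
\]
Its $\Z_q$-coordinate vanishes while, by the injectivity of $g$ together with $t\neq t'$, its $\Z_p$- and $\Z_r$-coordinates are nonzero; thus this difference lies in $q\,\Zpqr^\star$, giving $(A-A)\cap q\,\Zpqr^\star\neq\emptyset$.

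I expect the main obstacle to be the second step: reading off correctly from Lemma~\ref{lem:prime cycle 2} that an $r$-cycle in $pA$ produces $r$ points of $A$ with a common $\Z_q$-coordinate and a full spread of $\Z_r$-coordinates, the $\Z_p$-coordinates being left free. Once this bookkeeping is in place, the single observation that the functional structure forces $g$ to be injective delivers both halves of the conclusion simultaneously, so no further case analysis is needed.
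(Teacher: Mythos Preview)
Your argument is essentially identical to the paper's: pull back the $r$-cycle in $pA$ via Lemma~\ref{lem:prime cycle 2} to a subset $\{(g(t),c,t):t\in\Z_r\}$ of $A$, use the hypothesis to force $g$ injective, and read off both conclusions from that injectivity.

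One point deserves flagging, however. The printed hypothesis is $(A-A)\cap pq\,\Zpqr^\star\neq\emptyset$, which in your coordinates says there \emph{exist} two points of $A$ agreeing in their first two coordinates --- the opposite of ``$A$ has no two distinct points agreeing in both their $\Z_p$- and $\Z_q$-coordinates'' as you write. In fact the lemma as literally stated is false (take $p<r$ and $A=\{(0,0,j):j\in\Z_r\}\cup\{(1,0,0)\}$). The paper's own proof makes exactly the same silent switch: it deduces ``for $(x,y,z),(x',y,z')\in A$, if $z\neq z'$ then $x\neq x'$'', which follows from $(A-A)\cap pq\,\Zpqr^\star=\emptyset$, not from $\neq\emptyset$. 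The downstream applications confirm this reading; for instance, Lemma~\ref{lem:OLALALA} feeds in $pq\notin\ZZ_B$, which via $(A-A)\setminus\{0\}\subset\ZZ_B$ gives precisely $(A-A)\cap pq\,\Zpqr^\star=\emptyset$. So you have proved the intended statement by the same route as the paper; it would simply be worth saying explicitly that the $\neq$ in the hypothesis is evidently a typo for $=$.
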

\begin{proof}
	Since $pA$ has a $r$-cycle,	by Lemma \ref{lem:prime cycle 2}, we obtain that the multi-set $A$ has a subset $L$ which has the form
	$$
	\{(f(j), y, j): 0\le j\le r-1 \}
	$$
	for some $y\in \Z_q$ and some function $f: \Z_r \to \Z_p$. Since $(A-A)\cap pq\Zpqr^\star\not=\emptyset$, we obtain that for $(x,y,z), (x',y,z')\in A$, if $z\not= z'$, then $x\not=x'$. It follows that the function $f$ is injective and that $(A-A)\cap q\Zpqr^\star\not=\emptyset$. By the fact that $p,r$ are distinct primes,  we have $p>r$.
\end{proof}

By Proposition \ref{VanishingTwoPolynomial}, if $p\in \ZZ_A$ and $pr\notin \ZZ_A$, then $pA$ must have a $r$-cycle. Thus, the following lemma is a direct consequence of Lemma \ref{lem:lalala}.
\begin{lem}\label{lem:OLALALA}
	Let $(A,B)$ be a spectral pair in $\Zpqr$. Suppose that $p\in \ZZ_A$, $pr\notin \ZZ_A$ and $pq\notin \ZZ_B$. Then $p>r$ and $q\in \ZZ_B$.
\end{lem}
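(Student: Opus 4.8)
The plan is to reduce the statement directly to Lemma \ref{lem:lalala}, whose two hypotheses I will verify in turn: that $pA$ contains an $r$-cycle, and that $A$ has no two elements agreeing in their $\Z_p$- and $\Z_q$-coordinates (equivalently, writing $N=pqr$, that $(A-A)\cap pq\Zpqr^\star=\emptyset$). Once both are in place, Lemma \ref{lem:lalala} delivers $p>r$ together with $(A-A)\cap q\Zpqr^\star\neq\emptyset$, and it only remains to convert the latter back into a statement about $\ZZ_B$.

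First I would produce the $r$-cycle. Applying Proposition \ref{VanishingTwoPolynomial} with $n=p$, so that $N/n=qr$ has exactly the two prime divisors $q$ and $r$, the hypothesis $p\in\ZZ_A$ gives a decomposition $A(X^p)\equiv P(X^p)\Phi_q(X^{N/q})+Q(X^p)\Phi_r(X^{N/r})\bmod X^N-1$ with $P,Q$ nonnegative. The $\Phi_r$-term is precisely the contribution of the $r$-cycles of $pA$, and the ``moreover'' clause of that proposition gives $Q\not\equiv0$ exactly when $A(\omega_{pqr}^{pr})\neq0$, i.e. when $pr\notin\ZZ_A$, which is our second hypothesis. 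Hence $pA$ genuinely contains an $r$-cycle, as already remarked before the statement. For the emptiness condition I would invoke the spectral structure: since $(A,B)$ is a spectral pair, Proposition \ref{Equivalence of spectral set} gives $(A-A)\setminus\{0\}\subset\ZZ_B$, and by Lemma \ref{Lem: zeros N/p^d of mask polynomial} the set $\ZZ_B$ is invariant under multiplication by $\Zpqr^\star$. As $pq\notin\ZZ_B$, the whole orbit $pq\Zpqr^\star$ (the order-$r$ elements of $\Zpqr$) is disjoint from $\ZZ_B$, so $(A-A)\cap pq\Zpqr^\star=\emptyset$; equivalently, by Lemma \ref{lem:1}, $A$ is a graph over a subset of $\Z_p\times\Z_q$. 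This is exactly the input required.

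With both hypotheses verified, Lemma \ref{lem:lalala} yields $p>r$ and an element $u\in(A-A)\cap q\Zpqr^\star$. Being a nonzero difference, $u$ lies in $\ZZ_B$; and since $u$ belongs to the single $\Zpqr^\star$-orbit $q\Zpqr^\star$ while $\ZZ_B$ is unit-invariant, the whole orbit lies in $\ZZ_B$, in particular $q$ itself. This gives $q\in\ZZ_B$ and completes the argument. I expect the only delicate point to be the correct application of Proposition \ref{VanishingTwoPolynomial}: one must match the two primes $q,r$ dividing $N/p$ to the roles of $p,q$ in that proposition, so that its ``$Q\not\equiv0$'' criterion reads off $pr\notin\ZZ_A$ and isolates an $r$-cycle rather than a $q$-cycle. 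The remaining passages between membership in $\ZZ_B$ and intersections of $A-A$ with unit-orbits are routine once unit-invariance of $\ZZ_B$ is used.
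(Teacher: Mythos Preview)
Your argument is correct and is exactly the route the paper takes: Proposition \ref{VanishingTwoPolynomial} (with $n=p$, using $pr\notin\ZZ_A$) produces an $r$-cycle in $pA$, and then Lemma \ref{lem:lalala} finishes. You have in fact spelled out the step the paper compresses into one line, and in doing so you correctly read the intended hypothesis of Lemma \ref{lem:lalala} as $(A-A)\cap pq\Zpqr^\star=\emptyset$ (the printed ``$\neq\emptyset$'' is a typo, as its own proof shows); this is precisely what $pq\notin\ZZ_B$ supplies via $(A-A)\setminus\{0\}\subset\ZZ_B$.
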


Now we begin to prove Theorem \ref{main thm} $(3)\Rightarrow (1)$. If $\{pq, pr, qr\} \subset \ZZ_A$, then by Lemma \ref{Lem: zeros N/p^d of mask polynomial}, the set $A$ has to be $\Zpqr$, which satisfies the conditions $\Ta$ and $\Tb$. It remains to consider the case where $\sharp (\ZZ_A \cap \{pq, pr, qr\})\le 2$. We then decompose the proof of Theorem \ref{main thm} $(3)\Rightarrow (1)$ into three situation: $\sharp (\ZZ_A \cap \{pq, pr, qr\}) =2$; $\ZZ_A \cap \{pq, pr, qr\} =\emptyset$; $\sharp (\ZZ_A \cap \{pq, pr, qr\}) =1$. We will prove that $A$ satisfies the conditions $\Ta$ and $\Tb$ in each situation.

\subsection{Case 1: $\sharp (\ZZ_A \cap \{pq, pr, qr\}) =2$}\label{subsec: 2}
Without loss of generality, we suppose that $qr, pr\in \ZZ_A$ and $pq\notin \ZZ_A$. It follows that $pq~|~\sharp A=\sharp B$. If $pq\in \ZZ_B$, then by Lemma \ref{Lem: zeros N/p^d of mask polynomial}, we have that $r$ divides $\sharp B$ and consequently $\sharp A=\sharp B=pqr$. This implies that $A=\Zpqr$ and as a result $\{pq, pr, qr\} \subset \ZZ_A$ , which is impossible. Thus $pq\notin \ZZ_B$. By Lemma \ref{lem:1} and the fact that $pq$ divides $\sharp A$, we have $\sharp A=pq$ and
there exists a function $f: \Z_p\times \Z_q \to \Z_r$ such that 
\begin{equation}\label{eq:pq,pr 2}
A=\{(x,y,f(x,y)): (x,y)\in \Z_p \times \Z_q \}.
\end{equation}
By Lemma \ref{lem:3}, we conclude that $A$ satisfies the conditions  $\Ta$ and $\Tb$. 

\begin{rem}\label{Rem:2 is 2}
	In this case, by the fact that $pq\notin \ZZ_A$ and $\sharp B= \sharp A=pq$, we deduce by Lemma \ref{lem:1} that there exists a function $g: \Z_p\times \Z_q \to \Z_r$ such that 
	\begin{equation}\label{eq:pq,pr 3}
	B=\{(x,y,g(x,y)): (x,y)\in \Z_p \times \Z_q \}.
	\end{equation} 
	Hence, by Lemma \ref{lem:3}, we obtain that $\sharp (\ZZ_B \cap \{pq, pr, qr\}) =2$ and the spectrum $B$ also satisfies the conditions  $\Ta$ and $\Tb$. 
\end{rem}

\subsection{Case 2: $\ZZ_A \cap \{pq, pr, qr\} =\emptyset$}\label{subsec:0}
It follows from Lemma \ref{lem:1} that $\sharp B=\sharp A\le \min\{pq, pr, qr \}$. We first show that $\ZZ_B \cap \{pq, pr, qr\} =\emptyset$.


\begin{lem}\label{lem:empty set}
	$\ZZ_B \cap \{pq, pr, qr\} =\emptyset$.
\end{lem}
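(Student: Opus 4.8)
The plan is to argue by contradiction and to exploit the complete symmetry of the hypotheses in $p,q,r$ together with the prime-inequality mechanism of Lemma \ref{lem:OLALALA}. Suppose $\ZZ_B\cap\{pq,pr,qr\}\neq\emptyset$. Since $(B,A)$ is again a spectral pair and every hypothesis is symmetric in the three primes, I would split according to how many of $pq,pr,qr$ lie in $\ZZ_B$, and derive a contradiction in each case. Throughout I would use the bound $\sharp A=\sharp B\le\min\{pq,pr,qr\}$ recorded just before the lemma, and the symmetric analogues of Lemma \ref{lem:1} (permuting the primes), which turn the absence of a zero into a graph structure.

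First I would dispatch the easy sub-case, where at least two of $pq,pr,qr$ lie in $\ZZ_B$; say $pq,pr\in\ZZ_B$. Then $\Phi_r\mid B$ and $\Phi_q\mid B$, so $qr\mid\sharp B$, and combined with $\sharp B\le\min\{pq,pr,qr\}\le qr$ this forces $\sharp B=qr$. Since $qr\notin\ZZ_A$ (Case 2), the analogue of Lemma \ref{lem:1} makes $B$ a graph over $\Z_q\times\Z_r$, and equality of cardinalities makes it a \emph{full} graph. This places the pair $(B,A)$ exactly in the situation of Case 1 (subsection \ref{subsec: 2}), so by Remark \ref{Rem:2 is 2} the spectrum $A$ would satisfy $\sharp(\ZZ_A\cap\{pq,pr,qr\})=2$, contradicting the Case 2 assumption that this intersection is empty.

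The substantial sub-case is when exactly one of the three lies in $\ZZ_B$, which by symmetry I take to be $pq$, so that $pr,qr\notin\ZZ_B$. The key is to manufacture a prime-power zero of $B$. From $pr,qr\notin\ZZ_B$ the analogues of Lemma \ref{lem:1} force $A$ to be a graph over both $\Z_p\times\Z_r$ and $\Z_q\times\Z_r$; that is, no two elements of $A$ agree in coordinates $(1,3)$ or in $(2,3)$. On the other hand Case 2 gives $pq\notin\ZZ_A$, i.e. the third coordinates of $A$ are \emph{not} equidistributed modulo $r$, so some residue is attained at least twice. Two such elements of $A$ share their third coordinate yet, by the two injectivity properties, must differ in both their first and second coordinates, so their difference has type $r$; since $(A-A)\setminus\{0\}\subset\ZZ_B$, this yields $r\in\ZZ_B$.

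With $r\in\ZZ_B$ in hand I would cascade Lemma \ref{lem:OLALALA} in both orientations. Applied to $(B,A)$, the data $r\in\ZZ_B$, $pr,qr\notin\ZZ_B$ and $pr,qr\notin\ZZ_A$ give (through its two permutations) both $p\in\ZZ_A$ and $q\in\ZZ_A$. Applied now to $(A,B)$, the data $p\in\ZZ_A$, $pq\notin\ZZ_A$, $pr\notin\ZZ_B$ give $p>q$, while $q\in\ZZ_A$, $pq\notin\ZZ_A$, $qr\notin\ZZ_B$ give $q>p$ — a contradiction, completing the proof. I expect the genuine obstacle to be the third paragraph, namely producing the single zero $r\in\ZZ_B$: this is the one step that is not formal, since it requires marrying the graph structure forced on $A$ by $pr,qr\notin\ZZ_B$ with the failure of equidistribution supplied by Case 2. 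Once that zero is secured, the final contradiction is purely a careful bookkeeping of Lemma \ref{lem:OLALALA} in the two directions.
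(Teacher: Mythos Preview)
Your argument is correct and follows the same overall architecture as the paper's: a case split on $\sharp(\ZZ_B\cap\{pq,pr,qr\})$, with the cases $\ge 2$ dispatched via Remark~\ref{Rem:2 is 2}, and the case $=1$ handled by manufacturing a prime-zero and then cascading Lemma~\ref{lem:OLALALA} into contradictory inequalities among $p,q,r$.

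The one genuine difference is in the ``exactly one'' sub-case. The paper splits further into $\sharp B>r$ and $\sharp B=r$: in the first it uses pigeonhole on $B$ to obtain $r\in\ZZ_A$, and in the second it argues on $A$ (exactly as you do) to obtain $r\in\ZZ_B$; the two sub-sub-cases then require different cascades. Your observation that the argument on $A$ works uniformly---because $pq\in\ZZ_B$ forces $r\mid\sharp A$, and combined with $pq\notin\ZZ_A$ this guarantees a repeated third coordinate regardless of whether $\sharp A=r$ or $\sharp A>r$---eliminates that split and yields a single cascade. This is a mild but genuine streamlining. One small point: your sentence ``the third coordinates of $A$ are not equidistributed modulo $r$, so some residue is attained at least twice'' silently uses $r\mid\sharp A$ (from $pq\in\ZZ_B$); without that divisibility the implication would fail, so it is worth making explicit.
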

\begin{proof}
	Observe that $\sharp(\ZZ_B \cap \{pq, pr, qr\})$ takes the value $0$, $1$, $2$ or $3$. If $\sharp(\ZZ_B \cap \{pq, pr, qr\})=3$, then $\sharp B=pqr$. This means $A=B=\Zpqr$, which contradicts $\ZZ_A \cap \{pq, pr, qr\} =\emptyset$. If $\sharp (\ZZ_B \cap \{pq, pr, qr\}) =2$, then due to Remark \ref{Rem:2 is 2}, we have $\sharp (\ZZ_A \cap \{pq, pr, qr\}) =2$ which is a contradiction. It suffices to show $\sharp (\ZZ_B \cap \{pq, pr, qr\})\not=1$. We will prove it by contradiction. Assume that $\sharp (\ZZ_B \cap \{pq, pr, qr\})=1$. Without loss of generality, we suppose that $pq\in \ZZ_B$ and $pr,qr\notin \ZZ_B$. If follows that $r$ divides $\sharp B$, implying $\sharp B \ge r$. We thus consider two cases: $\sharp B>r$ and $\sharp B=r$, and prove that such a set $B$ does not exist in each of these two cases.
	
	If $\sharp B>r$, then by pigeonhole principle,  there exist two different elements $b,b'\in B$ such that $r~|~b-b'$. It follows that 
	$$
	(B-B)\cap  (r\Zpqr^\star\cup pr\Zpqr^\star \cup qr\Zpqr^\star ) \not=\emptyset.
	$$
	Since $pr,qr\notin \ZZ_A$ and $(A,B)$ is a spectral pair, we have that $$(B-B)\cap r\Zpqr^\star\not=\emptyset$$ and consequently $r\in \ZZ_A$. It follows from Lemma \ref{lem:OLALALA} that $r>p$, $r>q$ and $p,q\in \ZZ_B$. Since $p\in \ZZ_B$, $pr\notin \ZZ_B$ and $pq\notin \ZZ_A$, by Lemma \ref{lem:OLALALA}, we have $p>r$ which is impossible.
	
	Now suppose that $\sharp B=r$. If $A$ has the form
	$$
	A=\{(x_j,y_j,j):0\le j\le r-1 \},
	$$ 
	for some $x_j\in \Z_p$ and some $y_j\in \Z_q$ for all $0\le j\le r-1$, then by Lemma \ref{lem:prime cycle 2}, we have that $pqA$ is a $r$-cycle which implies that $pq\in \ZZ_A$. This is impossible. Thus there exist two different elements $a,a'\in A$ such that $r~|~a-a'$. It follows that
	$$
	(A-A)\cap (r\Zpqr^\star\cup pr\Zpqr^\star \cup qr\Zpqr^\star)  \not=\emptyset.
	$$
	Since $pr, qr\notin \ZZ_B$, we have $r\in \ZZ_B$. By the fact that $pr,qr\notin \ZZ_A$, $pr,qr\notin \ZZ_B$ and Lemma \ref{lem:OLALALA}, we have $r>p$, $r>q$ and $p,q\in \ZZ_A$. Since $p\in \ZZ_A$, $pq\notin \ZZ_A$ and $pr\notin \ZZ_B$, we have $p>q$. Similarly, since $q\in \ZZ_A$, $pq\notin \ZZ_A$ and $qr\notin \ZZ_B$, we have $q>p$ which is a contradiction.
	
	We conclude that $\ZZ_B \cap \{pq, pr, qr\}$ must be empty.

\end{proof}


Now we claim that $p,q,r\notin \ZZ_A$. In fact, if $p\in \ZZ_A$, then it follows from Lemma \ref{lem:OLALALA} that $p>q$ and $q\in \ZZ_B$. By Lemma \ref{lem:OLALALA} again, we have $q>p$ which is a contradiction. This completes the proof of our claim. Similarly, we have $p,q,r\notin \ZZ_B$.

Now we prove that $\sharp A=1$. If $\sharp A>1$, then $B$ has two different elements. By the fact that $\ZZ_A \cap \{p,q,r,pq,pr,qr \}=\emptyset$ and that $(B-B)\setminus\{0\} \subset \ZZ_A$, we have $1\in \ZZ_A$. Similarly, we have $1\in \ZZ_B$. This implies that for any two different elements $(x,y,z), (x',y',z') \in A$, we must have $x\not=x', y\not=y'$ and $z\not=z'$. It follows that $\sharp A\le \min\{p, q, r\} $. Without loss of generality, we assume $p<q<r$. Then have $\sharp A\le p$. If $\sharp A=p$, then $qrA$ has the from
$$
\{(j,0,0): 0\le j\le p-1 \}.
$$
It is easy to see that $qrA$ is a $p$-cycle and consequently $qr\in \ZZ_A$. This is impossible. Thus $\sharp A<p$. However, by Theorem \ref{Lem:Lam and Leung}, $\sharp A$ is a nonnegative integer combination of $p, q$ and $r$, that is, $\sharp A\ge \min\{p, q, r\}=p$. This is a contradiction. Thus we have $\sharp A=1$.  Obviously, the set $A$ satisfies the conditions $\Ta$ and $\Tb$.


%

\subsection{Case 3: $\sharp (\ZZ_A \cap \{pq, pr, qr\}) =1$}\label{subsec: 1}
Obviously, the condition $\Tb$ holds for $A$ vacuously. It remains to prove that $A$ satisfies the condition $\Ta$. Without loss of generality, we suppose that $qr\in \ZZ_A$ and $pq, pr\notin \ZZ_A$. Then the prime number $p$ divides $\sharp A$. 
We claim that $pq,pr\notin \ZZ_B$. In fact, if $pr\in \ZZ_B$, then $\sharp B$ is divided by $q$ and consequently $\sharp B\ge pq$. Since $pq\notin \ZZ_A$, by Lemma \ref{lem:1}, $B$ has the form (\ref{eq:f(pq)}) with $S=\Z_p\times \Z_q$. By Lemma \ref{lem:3}, we have $\sharp (\ZZ_B \cap \{pq, pr, qr\}) =2$. But by Remark \ref{Rem:2 is 2}, we have $\sharp (\ZZ_A \cap \{pq, pr, qr\}) =2$ which is a contradiction. Thus $pr\notin \ZZ_B$. Similarly, we have $pq\notin \ZZ_B$. On the other hand, by Lemma \ref{lem:empty set} that $\ZZ_B \cap \{ pq, pr, qr\}=\emptyset$ implying $\ZZ_A \cap \{ pq, pr, qr\}=\emptyset$, we have 
$\ZZ_B \cap \{pq, pr, qr\} \not=\emptyset$. Since $pq,pr\notin \ZZ_B$, we obtain
$qr\in \ZZ_B$.

If $\sharp A=p$, then both $A$ and $B$ satisfy $\Ta$. Assume that $\sharp A>p$. By the pigeonhole principle, there exist two different elements $b,b'\in B$ such that $p~|~b-b'$. It follows that 
$$
\ZZ_A\cap \{p,pq,pr \} \not=\emptyset.
$$
Since $pq,pr\notin \ZZ_A$, we have $p\in \ZZ_A$.
Applying Lemma \ref{lem:OLALALA} by the fact that $p\in \ZZ_A$, $pq\notin \ZZ_A$ and $pr\notin \ZZ_B$,  we have $p>q$ and $r\in \ZZ_B$. Similarly, we have $p>r$ and $q\in \ZZ_B$. Moreover, since $r\in \ZZ_B$ and $pr\notin \ZZ_B$, we obtain that $rB$ has a $p$-cycle. By Lemma \ref{lem:lalala}, we have $r>p$, which is a contradiction. Thus, we have $\sharp A=p$ which completes the proof. 

\section*{Acknowledgments} 
We would like to thank Romanos-Diogenes Malikiosis for bringing our attention to Tao's blog \cite{t2}. We are also grateful to the anonymous reviewer's valuable remarks.

\bibliographystyle{amsplain}


\begin{dajauthors}
\begin{authorinfo}[ruxi]
  Ruxi Shi\\
  Department of Mathematical sciences, University of Oulu\\
  Oulu, Finland\\
  ruxi\imagedot{}shi\imageat{}oulu\imagedot{}fi\\  
  {\em Current address:}\\
  Institute of Mathematics, Polish Academy of Sciences\\
  ul. \'Sniadeckich
  8,00-656 Warszawa,Poland\\
  rshi\imageat{}impan\imagedot{}pl \\
  \url{https://sites.google.com/view/ruxishi}
\end{authorinfo}
\end{dajauthors}

\end{document}